\documentclass{article}

\usepackage{arxiv}

\usepackage[utf8]{inputenc} 
\usepackage[T1]{fontenc}    
\usepackage{hyperref}       
\usepackage{url}            
\usepackage{booktabs}       
\usepackage{amsfonts}       
\usepackage{nicefrac}       
\usepackage{microtype}      
\usepackage{lipsum}		
\usepackage{graphicx}
\usepackage{doi}

\usepackage{amsmath}
\usepackage{amssymb}
\usepackage{amsthm} 
\usepackage{gensymb}
\usepackage{mathtools}
\usepackage{cancel} 
\usepackage{nicefrac}
\usepackage{upgreek} 
\usepackage{enumitem} 
\usepackage{bookmark}
\usepackage{listings} 
\usepackage{epstopdf} 
\usepackage{mathrsfs} 
\usepackage{mathdots} 
\usepackage{longtable} 
\usepackage[toc,page]{appendix}
\usepackage{color}
\usepackage{multibib}

\usepackage{algorithm}
\usepackage[noend]{algpseudocode} 

\usepackage{subcaption} 
\usepackage{mwe}

\usepackage{booktabs}

\title{Ordinal Optimisation for the Gaussian Copula Model}


\author{ Robert Chin\thanks{Department of Electrical and Electronic Engineering, The University of Melbourne, Australia \& School of Computer Science, University of Birmingham, United Kingdom. Email: \texttt{chinr@student.unimelb.edu.au}}
	\And
	Jonathan E. Rowe\thanks{School of Computer Science, University of Birmingham, United Kingdom \& The Alan Turing Institute, United Kingdom}
	\And
	Iman Shames\thanks{College of Engineering \& Computer Science, Australian National University, Australia} \\
	\And
	Chris Manzie\thanks{Department of Electrical and Electronic Engineering, The University of Melbourne, Australia} \\
	\And
	Dragan Ne\v{s}i\'{c}\thanks{Department of Electrical and Electronic Engineering, The University of Melbourne, Australia}
}

\date{}




\numberwithin{equation}{section} 


\makeatletter
\newcommand{\pushright}[1]{\ifmeasuring@#1\else\omit\hfill$\displaystyle#1$\fi\ignorespaces}
\newcommand{\pushleft}[1]{\ifmeasuring@#1\else\omit$\displaystyle#1$\hfill\fi\ignorespaces}
\makeatother


\newtheorem{problem}{Problem}[section]
\newtheorem{definition}{Definition}[section]
\newtheorem{proposition}{Proposition}[section]
\newtheorem{remark}{Remark}[section]
\newtheorem{theorem}{Theorem}[section]
\newtheorem{lemma}{Lemma}[section]
\newtheorem{corollary}{Corollary}[section]

\begin{document}
\maketitle

\begin{abstract}
We present results on the estimation and evaluation of success probabilities for ordinal optimisation over uncountable sets (such as subsets of $\mathbb{R}^{d}$). Our formulation invokes an assumption of a Gaussian copula model, and we show that the success probability can be equivalently computed by assuming a special case of additive noise. We formally prove a lower bound on the success probability under the Gaussian copula model, and numerical experiments demonstrate that the lower bound yields a reasonable approximation to the actual success probability. Lastly, we showcase the utility of our results by guaranteeing high success probabilities with ordinal optimisation.
\end{abstract}

\keywords{Ordinal optimisation \and Gaussian copula}

\section{Introduction}

Ordinal optimisation (OO) is an approach introduced in \cite{Ho1992} for softening difficult problems in stochastic search and optimisation \cite{Spall2003}, and offered as a complementary approach to conventional optimisation techniques when there is `little hope' of finding the global optimum solution. The outcome provided by OO is a high-probability guarantee that one or more out of a selected subset of candidate solutions is an acceptable sub-optimal solution, and its operation rests on two underlying principles: 1) by selecting the subset according to order, the selection is more `robust' to noise; and 2) by `goal softening' (i.e. increasing the degree of sub-optimality), chances of success can be improved.

OO was primarily introduced to the control theory community for the simulation-based optimisation of discrete-event dynamic systems \cite{Ho1992}, and has seen numerous successful applications in design/search problems across different disciplines. It was applied to stochastic optimal control in \cite{Deng1999}, where OO was used to find a heuristic solution to the Witsenhausen problem \cite{Witsenhausen1968}. A proposed solution 50\% better than Witsenhausen's own proposed solution was found in terms of performance cost. In \cite{Ho1995}, OO was applied to rare event simulation of overflow probabilities in queuing systems. By embracing goal softening, computational requirements for simulation were reduced by approximately 3 orders of magnitude. An improvement by a factor of 100 was also reported by \cite{Zhang2015} in the time taken to generate optimal cloud computing schedules by an OO method, compared to a Monte-Carlo approach.

A research area with roots from OO is the optimal computing budget allocation (OCBA) framework, which addresses the problem of efficient allocation of simulation resources \cite{Chen2000}. In the literature, the OCBA framework has also been referred to as ranking and selection \cite{Peng2018}, as well as `ordinal optimisation' \cite{Glynn2004}. The early results from the OCBA framework may also be regarded as a precursor of a pure exploration objective from the vast multi-armed bandit literature \cite{Lattimore2020}. In this paper, we refer to OO as the traditional, non-sequential setting as found in \cite[Chapter II]{Ho2007}, in which the `horse-race' selection rule is provably optimal \cite{Yang2002}. To date, OO has been formulated as a search problem over a finite search space. That is, the variables which encode all the possible solutions take on values from a finite set. There are of course problems where the search space will be infinite and possibly also uncountable (e.g. optimisation over the cone of positive semi-definite matrices for controller tuning \cite{Chin2019}, \cite{Maass2020}). For these problems, OO can still be informally applied in practice by conditioning on the expected number of acceptable solutions in the sample, accompanied with the assurance that a large enough sample from the search space becomes `representative' of the search space itself \cite{Lin2002}, and thus can be used as a good heuristic. In this paper, we present a reformulation of the original OO problem which is motivated by formally extending OO to search over uncountable sets.

Our contributions provide theoretical results under particular specialisations to our ordinal optimisation formulation, namely an additive noise and Gaussian copula model introduced in Section \ref{sec:prob}. Copula models see a host of applications in engineering, e.g. \cite{Zeng2014} for simulating communications channels, and finance, e.g. \cite{Cheng2007} for modelling returns. The usefulness of copulae stem from Sklar's theorem, which informally, says that any continuous multivariate distribution can be equivalently represented by its marginal distributions and a copula \cite[Theorem 2.3.3]{Nelsen2006}. In Section \ref{sec:prelim}, we focus on the additive noise model, and state results for both cases when the distributions are known and unknown. In Section \ref{sec:main}, we show that the success probability under a Gaussian copula model can be computed by assuming a special case of additive noise. We develop a lower bound for the success probability, and numerically compare the bound to other approximations. In Section \ref{sec:cases}, we demonstrate some of our results for guaranteeing high probabilities of success with ordinal optimisation.

\section{Preliminaries}
\label{sec:prob}

\subsection{Notation}
\label{sec:notation}
Throughout this paper, the symbols $\leq$ and $<$ refer to element-wise inequalities between vectors. The set $\mathbb{R}$ denotes the real numbers, $\mathbb{R}_{\geq 0}$ denotes the non-negative reals, and $\mathbb{N}$ denotes the set of natural numbers. Random vectors are written in bold capital (e.g. $\mathbf{X}$). Non-random vectors are denoted in bold lowercase (e.g. $\mathbf{v}$). The symbol $\top$ is used as superscript to indicate the matrix transpose. Following the notation of \cite{Shaked2007}, stochastic dominance is denoted by $\underset{\mathrm{st}}{\preceq}$ and the symbol $\underset{\mathrm{st}}{=}$ between random elements denotes equality in law. The standard Gaussian probability density function (PDF), cumulative distribution function (CDF), inverse CDF (i.e. quantile function) and $Q$-function (complementary CDF) are canonically represented using $\phi\left(\cdot\right)$, $\Phi\left(\cdot\right)$, $\Phi^{-1}\left(\cdot\right)$, $Q\left(\cdot\right)$ respectively. We write $\mathbf{X} \sim \mathcal{N}\left(\boldsymbol{\mu}, C\right)$ to denote that $\mathbf{X}$ is Gaussian distributed with mean $\boldsymbol{\mu}$ and covariance $C$. The probability of an event is measured by $\operatorname{Pr}\left(\cdot\right)$ with respect to a probability space that is clear from context. The abbreviation i.i.d. stands for mutually independent and identically distributed. The symbol $\mathbb{E}$ denotes mathematical expectation, the variance of a random variable $X$ is written as $\operatorname{Var}\left(X\right)$ and the covariance between $X$ and $Y$ is $\operatorname{Cov}\left(X, Y\right)$. The notation $\left[\mathbf{X} \middle| \mathbf{Y} = \mathbf{y}\right]$ is understood to mean a random vector that is equal in law to the conditional distribution of $\mathbf{X}$ given $\mathbf{Y} = \mathbf{y}$. The $k$\textsuperscript{th} order statistic of a i.i.d. sample of size $n$ from parent distribution $Z$ will be denoted by $Z_{k:n}$. The binomial coefficient is denoted $\binom{n}{k}$. The symbols $\mathbf{1}$ and $I$ denote a vector of ones and the identity matrix respectively (with dimensions clear from context). The logarithm $\log\left(\cdot\right)$ is taken to mean the natural logarithm, while $\cot\left(\cdot\right)$ is the cotangent function.

\subsection{Formulation of Ordinal Optimisation}
\label{sec:formulation}

We introduce the following class of problem.

\begin{problem}[Ordinal optimisation]
Consider $n$ i.i.d. copies of $\left(Z_{i}, X_{i}\right)$ drawn from some bivariate distribution of $\left(Z, X\right)$. We observe $Z_{1}, \dots, Z_{n}$, and order these observations from best to worst, denoted by $Z_{1:n} \leq \dots \leq Z_{n:n}$. The best $m$ are selected, given by $Z_{1:n} \leq \dots \leq Z_{m:n}$, with respective $X$-values denoted as $X_{\left\langle 1\right\rangle}, \dots, X_{\left\langle m\right\rangle}$, which are initially unobserved. More explicitly, we have selected the pairs $\left(Z_{1:n}, X_{\left\langle 1\right\rangle}\right), \dots, \left(Z_{m:n}, X_{\left\langle m\right\rangle}\right)$. The success probability is defined as
\begin{equation}
p_{\mathrm{success}}\left(n, m, \alpha\right) := \operatorname{Pr}\left(\bigcup_{i = 1}^{m}\left\{X_{\left\langle i\right\rangle} \leq x_{\alpha}^{*}\right\}\right) = \operatorname{Pr}\left( \min_{i \in \left\{1, \dots, m\right\}}\left\{X_{\left\langle i\right\rangle}\right\} \leq x_{\alpha}^{*}\right).
\label{eq:ord-opt-defn}
\end{equation}
where $x_{\alpha}^{*}$ is the $100\alpha$ percentile of the distribution of $X$. That is, what is the probability that at least one of the selected $m$ is within the best $100\alpha$ percentile?
\label{prob:ord_opt}
\end{problem}
If we distill the above problem into its key elements, the overall theme is that of uncertain optimal selection, whereby $X$ represents the true value trying to be optimised, while $Z$ represents the actual observations (which may be noisy versions of the $X$-values). There is also a notion of soft optimisation for the selected candidates, defined in this paper as probability of performance within best $100\alpha$ percentile, where $\alpha$ plays the role of goal softening (although our framework can be adapted to work with other types of goal softening, such as probability of performance below some nominal level). We suggest some applications where Problem \ref{prob:ord_opt} could be a suitable model, and having the ability to quantify the probability of success would be useful.
\begin{itemize}
\item In job hiring, the $Z_{i}$ are job applicants' evaluations in an initial screening process, and the $X_{i}$ are evaluations of hired or interviewed applicants.
\item In robotic control, the $Z_{i}$ are the performances of tuned/trained robotic controllers in an offline simulation, and the $X_{i}$ are tested performances of controllers on the physical robot.
\item In vaccine discovery, the $Z_{i}$ are the effectiveness of vaccine candidates in computational experiments, and the $X_{i}$ are the effectiveness of the vaccine candidates in clinical trials.
\end{itemize}

The original OO formulation in \cite[Chapter II]{Ho2007} is related to Problem \ref{prob:ord_opt} in the sense that the $\left(Z_{i}, X_{i}\right)$ are not i.i.d.; rather the $X_{i}$ are sampled without replacement from a finite population, and $Z_{i}$ is related to $X_{i}$ via additive noise. To arrive at Problem \ref{prob:ord_opt} under an additive noise model, we can take the support of $X$ to be a continuum, where the distributions of $X_{i}$ and $Z_{i}$ themselves are induced by an arbitrary/black-box i.i.d. sampling mechanism from an uncountably large population (thus transforming the problem into search over an uncountable set). Interestingly, if Problem \ref{prob:ord_opt} is then conditioned on the cardinality of $\left\{i:X_{\left\langle i\right\rangle}\leq x_{\alpha}^{*},1\leq i\leq n\right\}$, i.e. the number of ideal/acceptable candidates in the initial sample, the conditional success probability then coincides with the `alignment probability' in original OO. These alignment probabilities have typically been approximated via Monte-Carlo simulation \cite{Lau1997}. In this paper, we discuss other approaches for computing/estimating the unconditional success probabilities.

\subsection{Problem Statement}

In this subsection, we introduce two problem statements, which are subsumed by Problem \ref{prob:ord_opt}. The first is an additive noise model. Under this refinement, we denote the success probability as $p_{\mathrm{success}}^{\mathrm{A}}$, followed by dependencies in parentheses as required. The specialised problem is formally stated as follows.
\begin{problem}[Additive noise OO]
In addendum to Problem \ref{prob:ord_opt}, the causal mechanism for $Z_{i}$ is given by
\begin{equation}
Z_{i} = X_{i} + Y_{i}
\end{equation}
where $X_{i}$ and $Y_{i}$ are independent, and $Y_{i}$ may be viewed as a noise term. Moreover, both $X_{i}$, $Y_{i}$ are continuous random variables. Denote $f_{X}\left(x\right)$ and $F_{X}\left(x\right)$ as the PDF and CDF respectively of each $X_{i}$, and $f_{Y}\left(x\right)$ and $F_{Y}\left(y\right)$ as the PDF and CDF respectively of each $Y_{i}$. We seek to:
\begin{enumerate}[label=(\alph*)]
\item evaluate $p_{\mathrm{success}}^{\mathrm{A}}\left(n, m, \alpha\right)$ if the distributions of $X_{i}$ and $Y_{i}$ are known, \label{prob:additive-noise-part-a}
\item as a special case of \ref{prob:additive-noise-part-a}, evaluate $p_{\mathrm{success}}^{\mathrm{A}}\left(n, m, \alpha, \xi\right)$ when it is known in particular that $X_{i} \sim \mathcal{N}\left(0, 1\right)$ and $Y_{i} \sim \mathcal{N}\left(0, \xi^{2}\right)$, where we refer to $\xi^{2} = \operatorname{Var}\left(Y_{i}\right)/\operatorname{Var}\left(X_{i}\right)$ as the noise-to-signal ratio, \label{prob:additive-noise-part-b} 
\item provide bounds for $p_{\mathrm{success}}^{\mathrm{A}}\left(n, m, \alpha\right)$ if the distributions of $X_{i}$ and $Y_{i}$ are not known. \label{prob:additive-noise-part-c}
\end{enumerate}
\label{prob:additive-noise}
\end{problem}

Note that a solution to \ref{prob:additive-noise-part-a} of course implies a solution to \ref{prob:additive-noise-part-b}, however we list \ref{prob:additive-noise-part-b} separately due to its connection with the upcoming problem, which is the focus of the main results in this paper. The second model considered is a Gaussian copula for $\left(X_{i}, Z_{i}\right)$, which imposes a particular class of copulae on the joint distribution.
\begin{definition}[Gaussian copula model]
The Gaussian copula is a family of copulae, parametrised by a correlation matrix $\Sigma$, and given by the joint CDF
\begin{equation}
C\left(u_{1}, \dots, u_{d}; \Sigma\right) = \boldsymbol{\Phi}_{\mathbf{0}, \Sigma}\left(\Phi^{-1}\left(u_{1}\right), \dots, \Phi^{-1}\left(u_{d}\right)\right)
\end{equation}
where $\boldsymbol{\Phi}_{\mathbf{0}, \Sigma}$ is the CDF of the multivariate Gaussian with zero mean and covariance matrix equal to correlation matrix $\Sigma$.
\end{definition}

Under this refinement, we denote the success probability as $p_{\mathrm{success}}^{\mathrm{G}}$, followed by dependencies in parentheses as required. The specialised problem is formally stated as follows.

\begin{problem}[Gaussian copula OO]
In addendum to Problem \ref{prob:ord_opt}, the $\left(Z_{i}, X_{i}\right)$ are i.i.d. copies from a bivariate distribution with a Gaussian copula with correlation of $\rho > 0$. We seek to:
\begin{enumerate}[label=(\alph*)]
\item evaluate $p_{\mathrm{success}}^{\mathrm{G}}\left(n, m, \alpha, \rho\right)$ directly, \label{prob:copula-part-a}
\item bound $p_{\mathrm{success}}^{\mathrm{G}}\left(n, m, \alpha, \rho\right)$ with estimates. \label{prob:copula-part-b}
\end{enumerate}
\label{prob:copula}
\end{problem}

In the literature, the class of multivariate distributions with Gaussian copulae are known as `non-paranormal' distributions \cite{Liu2012a}, and alternatively as `meta-Gaussian' distributions \cite{Storvik2009}. Thus Problem \ref{prob:copula} asserts that $\left(Z, X\right)$ are non-paranormal/meta-Gaussian with positive correlation. Compared to the additive noise model, the Gaussian copula model allows for a generally non-additive noise representation. This is a justifiable model in many settings, as one only needs to look at the abundance of applications for the Gaussian copula in literature.

Although the Gaussian copula can model non-additive noise, it is later demonstrated in Theorem \ref{thm:special_case} that Problem \ref{prob:copula}\ref{prob:copula-part-a} can be addressed by considering a special case of the additive noise model by Problem \ref{prob:additive-noise}\ref{prob:additive-noise-part-b}. Hence we devote attention to both Problems \ref{prob:additive-noise} and \ref{prob:copula} in this paper.

\section{Ordinal Optimisation Under Additive Noise}
\label{sec:prelim}

In this section, we seek to establish a solution to Problem \ref{prob:additive-noise}. The distribution-free case \ref{prob:additive-noise-part-c} is considered first, followed by case \ref{prob:additive-noise-part-a} when the distribution of $X$ and $Y$ are known. 

\subsection{Distribution-Free Bounds}
\label{sec:dist-free}

\begin{proposition}[Distribution-free bounds]
The success probability from Problem \ref{prob:additive-noise} is bounded by
\begin{equation}
1 - \left(1 - \alpha\right)^{m} \leq p_{\mathrm{success}}^{\mathrm{A}}\left(n, m, \alpha\right) \leq 1 - \left(1 - \alpha\right)^{n}
\end{equation}
for any $n \in \mathbb{N}$, $m \in \left\{1, \dots, n\right\}$, $\alpha \in \left[0, 1\right]$.
\label{prop:dist-free-bound}
\end{proposition}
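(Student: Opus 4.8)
The plan is to treat the two inequalities by quite different means, since the upper bound is a soft monotonicity statement that holds for any selection rule, whereas the lower bound is the one place where the additive structure (and the positive dependence it creates) is genuinely needed.

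For the upper bound I would argue as follows. The retained values $X_{\langle 1\rangle},\dots,X_{\langle m\rangle}$ are a subset of the full sample $X_{1},\dots,X_{n}$, so $\min_{i\le m}X_{\langle i\rangle}\ge\min_{1\le i\le n}X_{i}$. Hence the success event is contained in $\{\min_{i}X_{i}\le x_{\alpha}^{*}\}$, and since the $X_{i}$ are i.i.d.\ with $F_{X}(x_{\alpha}^{*})=\alpha$ (continuity of $X$ in the additive model), its probability equals $1-(1-\alpha)^{n}$. This direction uses neither the additive mechanism nor the ranking by $Z$; only $m\le n$ enters.

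For the lower bound I would first condition on the whole vector $(X_{1},\dots,X_{n})$. Writing $S=\{i:X_{i}\le x_{\alpha}^{*}\}$ for the acceptable indices and $K=|S|$, the model $Z_{i}=X_{i}+Y_{i}$ with i.i.d.\ $Y_{i}$ makes the $Z_{i}$ conditionally independent, and because acceptable candidates carry a smaller location shift it yields the key cross-group ordering $Z_{i}\underset{\mathrm{st}}{\preceq}Z_{j}$ for every $i\in S$, $j\in S^{c}$. The failure event (all $m$ retained candidates unacceptable) is then precisely the event that the $m$ smallest $Z$'s all lie in $S^{c}$. I would then average back over $(X_{1},\dots,X_{n})$: since $K\sim\mathrm{Binomial}(n,\alpha)$ and any fixed block of $m$ indices is unacceptable with probability $(1-\alpha)^{m}$, the identity $\mathbb{E}\!\left[\binom{n-K}{m}/\binom{n}{m}\right]=(1-\alpha)^{m}$ closes the chain, provided the conditional failure probability never exceeds $\binom{n-K}{m}/\binom{n}{m}$, the value it would take under a selection blind to $Z$.

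The hard part is exactly that last reduction, namely a combinatorial dominance lemma: for independent $Z_{1},\dots,Z_{n}$ in which the group $S$ is stochastically smaller than $S^{c}$ across all cross pairs, the probability that the $m$ smallest all belong to $S^{c}$ is at most the exchangeable value $\binom{n-k}{m}/\binom{n}{m}$. Its simplest instance, two candidates, reduces to the elementary inequality $\Pr(Z_{j}<Z_{i})\le\tfrac12$ whenever $Z_{i}\underset{\mathrm{st}}{\preceq}Z_{j}$, which follows from $\int F_{j}\,dF_{i}\ge\int F_{i}\,dF_{i}=\tfrac12$ using $F_{i}\ge F_{j}$ pointwise. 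For general $m$ and $k$ I expect the same pointwise CDF comparison to work, bounding the survival/CDF products that describe the bottom-$m$ positions against their exchangeable counterparts, carried through by an induction on $k$ or $m$. Verifying this lemma in full generality, rather than the averaging identity or the upper bound, is where I anticipate the real work to lie.
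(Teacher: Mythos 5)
Your skeleton coincides with the paper's own proof. The upper bound via $\min_{1\le i\le n}X_{i}\le\min_{i\le m}X_{\left\langle i\right\rangle}$ is exactly the paper's argument, and for the lower bound the paper likewise conditions on the number of acceptable candidates, invokes the blind-pick comparison $p_{\mathrm{success}|g}\ge 1-\binom{n-g}{m}\div\binom{n}{m}$, and finishes with the same binomial averaging you describe; your identity $\mathbb{E}\bigl[\binom{n-K}{m}/\binom{n}{m}\bigr]=\left(1-\alpha\right)^{m}$ for $K\sim\mathrm{Binomial}\left(n,\alpha\right)$ is correct and is precisely the paper's simplification step.

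The genuine gap is the step you yourself flag as ``the real work'': the dominance lemma that ordered selection never does worse than blind pick. You verify only the two-candidate instance, and even there your displayed inequality is reversed: from $F_{i}\ge F_{j}$ pointwise one gets $\operatorname{Pr}\left(Z_{j}<Z_{i}\right)=\int F_{j}\,dF_{i}\le\int F_{i}\,dF_{i}=\tfrac{1}{2}$, not $\ge$. For general $m$ and $k$ you offer only the hope of an induction. The paper does not prove this lemma either; it cites it as the alignment-probability result in \cite[Section 4.3.2]{Ho2007}. So your reduction is faithful to the paper, but as a self-contained proof it is incomplete. Fortunately, under your conditioning the lemma has a short proof that exploits the additive structure you already isolated: conditional on $\left(X_{1},\dots,X_{n}\right)$, write $Z_{i}=x_{i}+Y_{i}$, note that failure is the event $\left\{\min_{i\in S}Z_{i}>Z_{S^{c}}^{\left(m\right)}\right\}$ where $Z_{S^{c}}^{\left(m\right)}$ is the $m$\textsuperscript{th} smallest value in the unacceptable group (edge cases $k=0$ or $n-k<m$ being trivial), and then, keeping the $Y_{i}$ fixed, move every acceptable $x_{i}$ up to $x_{\alpha}^{*}$ and every unacceptable $x_{j}$ down to $x_{\alpha}^{*}$. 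Each move enlarges the failure event realisation by realisation (raising an $S$-coordinate raises the left side; lowering an $S^{c}$-coordinate lowers the right side), and after all moves the $Z_{i}$ are i.i.d.\ continuous, so exchangeability gives the failure probability exactly $\binom{n-k}{m}\div\binom{n}{m}$. This closes your gap without any induction, and with it your argument becomes a complete proof equivalent to the paper's.
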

\begin{proof}
See Appendix \ref{sec:proof-dist-free-bound}.
\end{proof}
\begin{remark}
The lower bound depends only on $\alpha$ and $m$, while the upper bound only depends on $\alpha$ and $n$. The upper and lower bounds will be equal if $n = m$, or if $\alpha = 0$ or $\alpha = 1$.
\label{rem:n-equal-m}
\end{remark}
Note that the lower bound may be interpreted as the `blind-pick' success probability \cite{Ho2007}, which is the probability of success if the selected $m$ were uniformly randomly selected without replacement from the sample of $n$. The upper bound is also equal to the probability that there is any acceptable $X$-value in the sample at all (via the binomial distribution).

\subsection{Distribution-Known Success Probability}
\label{sec:p-success}

To address Problem \ref{prob:additive-noise}\ref{prob:additive-noise-part-a}, an exact formula for the success probability is available, if the full distribution functions of $X$ and $Y$ are known.
\begin{proposition}[Exact success probability]
The success probability from Problem \ref{prob:additive-noise} is given by the expression
\begin{equation}
p_{\mathrm{success}}^{\mathrm{A}}\left(n, m, \alpha\right)  = 1 - \left(1 - \alpha\right)^{n} - \sum_{g = 1}^{n - m}\binom{n}{g}\alpha^{g}\left(1 - \alpha\right)^{n - g}\left(\int_{-\infty}^{\infty}F_{Z_{\left\{ g+m\right\} }}\left(z\right)f_{Z_{\left\{ 1\right\} }}\left(z\right)dz\right)
\label{eq:p-success-expresion}
\end{equation}
where
\begin{gather}
F_{Z_{\left\{g+m\right\}}}\left(z\right) := \sum_{j = 0}^{n - g - m}\binom{n - g}{j}\left[1 - F_{\overline{Z}}\left(z\right)\right]^{j}\left[F_{\overline{Z}}\left(z\right)\right]^{n - g - j}, \label{eq:upper-conditional-cdf}\\
F_{\overline{Z}}\left(z\right) := \int_{x_{\alpha}^{*}}^{\infty}F_{Y}\left(z - x\right)f_{\overline{X}}\left(x\right)dx,  \label{eq:sum-dist-upper-cdf}\\
f_{\overline{X}}\left(x\right) := \begin{cases} \dfrac{f_{X}\left(x\right)}{1 - \alpha}, & x \geq x_{\alpha}^{*} \\
0, & x < x_{\alpha}^{*} \end{cases}, \label{eq:left-truncated-pdf} 
\end{gather}
and
\begin{gather}
f_{Z_{\left\{1\right\} }}\left(z\right) := g\left[1 - F_{\underline{Z}}\left(z\right)\right]^{g - 1}f_{\underline{Z}}\left(z\right), \label{eq:lower-conditional-pdf} \\
F_{\underline{Z}}\left(z\right) := \int_{-\infty}^{x_{\alpha}^{*}}F_{Y}\left(z - x\right)f_{\underline{X}}\left(x\right)dx,  \label{eq:sum-dist-lower-cdf} \\
f_{\underline{Z}}\left(z\right) := \int_{-\infty}^{x_{\alpha}^{*}}f_{Y}\left(z - x\right)f_{\underline{X}}\left(x\right)dx,  \label{eq:sum-dist-lower-pdf} \\
f_{\underline{X}}\left(x\right) := \begin{cases} \dfrac{f_{X}\left(x\right)}{\alpha}, & x \leq x_{\alpha}^{*} \\
0, & x > x_{\alpha}^{*} \end{cases}.  \label{eq:right-truncated-pdf}
\end{gather}
\label{prop:success-prob}
\end{proposition}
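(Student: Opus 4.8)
The plan is to condition on the number of acceptable candidates present in the initial sample of $n$, and then reduce the failure event to a comparison between two independent order statistics. Let $G := \bigl|\{\, i : X_{i} \le x_{\alpha}^{*},\ 1 \le i \le n \,\}\bigr|$ denote the number of acceptable candidates drawn. Since $\operatorname{Pr}\left(X_{i} \le x_{\alpha}^{*}\right) = F_{X}\left(x_{\alpha}^{*}\right) = \alpha$ and the pairs are i.i.d., we have $G \sim \mathrm{Binomial}\left(n, \alpha\right)$, so that $\operatorname{Pr}\left(G = g\right) = \binom{n}{g}\alpha^{g}\left(1 - \alpha\right)^{n - g}$. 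Applying the law of total probability, $p_{\mathrm{success}}^{\mathrm{A}} = \sum_{g = 0}^{n}\operatorname{Pr}\left(G = g\right)\operatorname{Pr}\left(\text{success} \mid G = g\right)$, I would first dispatch the two boundary regimes. If $g = 0$ there is no acceptable candidate at all, so the conditional success probability is $0$; if $g \ge n - m + 1$ then there are strictly fewer than $m$ unacceptable candidates, so at least one of the selected $m$ is necessarily acceptable and the conditional success probability is $1$. Only the range $1 \le g \le n - m$ requires real work. Writing the conditional success probability there as $1 - \operatorname{Pr}\left(\text{failure} \mid G = g\right)$ and collecting all terms, the guaranteed-success tail together with the middle range combines to $1 - \operatorname{Pr}\left(G = 0\right) = 1 - \left(1 - \alpha\right)^{n}$, leaving precisely the stated sum, provided the conditional failure probability equals the integral in \eqref{eq:p-success-expresion}.

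The key structural step is to show that, conditioned on $\{G = g\}$, the $Z$-observations of the $g$ acceptable candidates are i.i.d. copies of $\underline{Z} = \underline{X} + Y$, those of the $n - g$ unacceptable candidates are i.i.d. copies of $\overline{Z} = \overline{X} + Y$, and the two groups are mutually independent. This follows from the exchangeability of the i.i.d. pairs $\left(X_{i}, Y_{i}\right)$ and the independence of $X_{i}$ and $Y_{i}$: conditioning on the event that a given index is acceptable replaces the law of $X_{i}$ by the truncation \eqref{eq:right-truncated-pdf}, while the unacceptable indices carry the truncation \eqref{eq:left-truncated-pdf}. Convolving each truncated $X$-law with that of $Y$ then yields exactly the expressions \eqref{eq:sum-dist-upper-cdf}, \eqref{eq:sum-dist-lower-cdf} and \eqref{eq:sum-dist-lower-pdf} for the CDFs and PDF of $\overline{Z}$ and $\underline{Z}$.

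Next I would characterise the failure event. Failure means the $m$ smallest $Z$-values are all drawn from unacceptable candidates, which, using continuity to discard ties, is equivalent to the smallest acceptable observation exceeding the $m$-th smallest unacceptable observation, i.e. $\underline{Z}_{(1)} > \overline{Z}_{(m)}$, where $\underline{Z}_{(1)}$ is the minimum of the $g$ i.i.d. copies of $\underline{Z}$ and $\overline{Z}_{(m)}$ is the $m$-th order statistic of the $n - g$ i.i.d. copies of $\overline{Z}$ (which exists since $n - g \ge m$ in this range). The standard minimum-density formula identifies $f_{Z_{\{1\}}}$ in \eqref{eq:lower-conditional-pdf} as the density of $\underline{Z}_{(1)}$, and reindexing the sum in \eqref{eq:upper-conditional-cdf} by $j \mapsto n - g - j$ identifies $F_{Z_{\{g+m\}}}$ as the CDF of $\overline{Z}_{(m)}$, since $\sum_{j=0}^{n-g-m}\binom{n-g}{j}\left[1 - F_{\overline{Z}}\right]^{j}\left[F_{\overline{Z}}\right]^{n-g-j} = \sum_{k=m}^{n-g}\binom{n-g}{k}\left[F_{\overline{Z}}\right]^{k}\left[1 - F_{\overline{Z}}\right]^{n-g-k}$. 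Because the two groups are independent, conditioning on $\underline{Z}_{(1)} = z$ and integrating gives $\operatorname{Pr}\left(\text{failure} \mid G = g\right) = \int_{-\infty}^{\infty}F_{Z_{\{g+m\}}}\left(z\right)f_{Z_{\{1\}}}\left(z\right)\,dz$, which is the required identity; substituting it back completes the proof.

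The main obstacle I anticipate is the conditional-independence step: justifying carefully that conditioning on the count $\{G = g\}$ alone, rather than on which particular indices are acceptable, still renders the acceptable and unacceptable $Z$-observations independent i.i.d. samples from $\underline{Z}$ and $\overline{Z}$. Once this is established, the failure characterisation, the two order-statistic distributions, and the binomial reindexing are all routine bookkeeping.
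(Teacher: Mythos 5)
Your proposal is correct and follows essentially the same route as the paper's proof: conditioning on the binomial count $G$ of acceptable candidates, reducing the conditional failure event to the order-statistic comparison $\overline{Z}_{(m)} < \underline{Z}_{(1)}$ between the truncated-plus-noise populations, and integrating via conditional independence. The only difference is that the paper imports this failure characterisation from the alignment-probability result in \cite[Equation (2.19)]{Ho2007}, whereas you derive it directly (along with the conditional i.i.d.\ structure given $G = g$), which makes your argument self-contained but not a different approach.
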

\begin{proof}
See Appendix \ref{sec:proof-success-prob}.
\end{proof}
While it may seem useful to have an exact expression, direct evaluation of $p_{\mathrm{success}}$ involves the nested evaluation of multiple integrals (which may not have any analytical form), hence even numerical evaluation of $p_{\mathrm{success}}$ can result in a `hefty' computation. Performing straightforward numerical integration using quadrature methods with Proposition \ref{prop:success-prob} is of time complexity $O\left(n\right)$, due to the main outer sum of \eqref{eq:p-success-expresion}. Binomial probabilities can be computed in $O\left(1\right)$ time using the log-gamma function \cite[Section 4.12]{Krishnamoorthy2016}, and we have assumed that numerical integration in one dimension can be performed in $O\left(1\right)$ time for a given error tolerance. Note that as the CDF of an order statistic, \eqref{eq:upper-conditional-cdf} can also be evaluated in $O\left(1\right)$ time by rewriting it as an integral involving the PDF, which resembles a binomial probability \cite[Eq. (2.2.2)]{Arnold2008}. Hence it can subsequently be evaluated in $O\left(1\right)$ using the aforementioned log-gamma approach.

However, the $O\left(n\right)$ time complexity can be misleading because the integral in \eqref{eq:p-success-expresion} involves multiple nested $O\left(1\right)$ computations. There will also be the approximation error associated with numerical integration.

\section{Ordinal Optimisation Under Gaussian Copula Model}
\label{sec:main}

This section contains our main results, as we study Problem \ref{prob:copula}. Case \ref{prob:copula-part-a} is first addressed via a connection to Problem \ref{prob:additive-noise}\ref{prob:additive-noise-part-b}. Then motivated by an approximation formula for $p_{\mathrm{success}}$, we address case \ref{prob:copula-part-b} using a stochastic dominance argument.

\subsection{Connection to Additive Noise}

The following result shows that under the Gaussian copula model, the $p_{\mathrm{success}}$ can be found via a special case of the additive noise problem.

\begin{theorem}[Connection between Gaussian copula model and additive noise]
Consider Problem \ref{prob:copula}\ref{prob:copula-part-a}. Then $p_{\mathrm{success}}^{\mathrm{G}}$ can be computed by instead solving Problem \ref{prob:additive-noise}\ref{prob:additive-noise-part-b}, with noise-to-signal ratio $\xi^{2} =1/\rho^{2} - 1$. Explicitly,
\begin{equation}
    p_{\mathrm{success}}^{\mathrm{G}}\left(n, m, \alpha, \rho\right) = p_{\mathrm{success}}^{\mathrm{A}}\left(n, m, \alpha, \dfrac{1}{\rho^{2}} - 1\right)
    \label{eq:special_case}
\end{equation}
\label{thm:special_case}
\end{theorem}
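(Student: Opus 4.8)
The plan is to show that the success probability of Problem~\ref{prob:ord_opt} is invariant under strictly increasing transformations of the marginal laws of both $X$ and $Z$, so that it depends on the joint distribution of $\left(Z,X\right)$ only through its copula, together with $n$, $m$ and $\alpha$. Granting this, the theorem reduces to checking that the additive Gaussian model of Problem~\ref{prob:additive-noise}\ref{prob:additive-noise-part-b} with $\xi^{2} = 1/\rho^{2} - 1$ induces precisely the Gaussian copula with correlation $\rho$ that specifies Problem~\ref{prob:copula}\ref{prob:copula-part-a}.

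First I would establish the invariance claim. By Sklar's theorem there is a bivariate standard Gaussian pair $\left(\tilde{Z}, \tilde{X}\right)$ with correlation $\rho$ such that $\left(Z, X\right) \underset{\mathrm{st}}{=} \left(F_{Z}^{-1}\left(\Phi\left(\tilde{Z}\right)\right), F_{X}^{-1}\left(\Phi\left(\tilde{X}\right)\right)\right)$, where $F_{Z}, F_{X}$ are the (continuous) marginals. Since $F_{Z}^{-1} \circ \Phi$ is strictly increasing, the ordering induced on the sample $Z_{1}, \dots, Z_{n}$ coincides with that induced on $\tilde{Z}_{1}, \dots, \tilde{Z}_{n}$; hence the $m$ selected indices, and therefore the associated selected values, form the same random set under either representation (ties occurring with probability zero by continuity). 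Because $x_{\alpha}^{*} = F_{X}^{-1}\left(\alpha\right)$ and $F_{X}^{-1} \circ \Phi$ is strictly increasing, the event $\left\{X_{\langle i\rangle} \leq x_{\alpha}^{*}\right\}$ is equivalent to $\left\{\tilde{X}_{\langle i\rangle} \leq \Phi^{-1}\left(\alpha\right)\right\}$, where $\Phi^{-1}\left(\alpha\right)$ is exactly the $100\alpha$ percentile of the standard Gaussian $\tilde{X}$. Taking the union over $i \in \left\{1, \dots, m\right\}$ then shows that $p_{\mathrm{success}}$ equals the corresponding success probability for the canonical pair $\left(\tilde{Z}, \tilde{X}\right)$, which is a function of $\rho$, $n$, $m$ and $\alpha$ alone.

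Next I would identify the additive Gaussian model with this canonical pair. For $X_{i} \sim \mathcal{N}\left(0,1\right)$ and $Y_{i} \sim \mathcal{N}\left(0, \xi^{2}\right)$ independent, the pair $\left(Z_{i}, X_{i}\right) = \left(X_{i} + Y_{i}, X_{i}\right)$ is jointly Gaussian, so its copula is the Gaussian copula whose parameter equals the Pearson correlation $\operatorname{Corr}\left(Z_{i}, X_{i}\right) = \operatorname{Var}\left(X_{i}\right) / \sqrt{\operatorname{Var}\left(X_{i}\right)\operatorname{Var}\left(Z_{i}\right)} = 1/\sqrt{1 + \xi^{2}}$. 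Setting this equal to $\rho$ and solving gives $\xi^{2} = 1/\rho^{2} - 1$, which matches the stated noise-to-signal ratio. Since this additive model already has a standard Gaussian $X$-marginal, and its rank-standardised $Z$ reproduces $\left(\tilde{Z}, \tilde{X}\right)$, the invariance result yields $p_{\mathrm{success}}^{\mathrm{A}}\left(n, m, \alpha, 1/\rho^{2} - 1\right) = p_{\mathrm{success}}^{\mathrm{G}}\left(n, m, \alpha, \rho\right)$.

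I expect the main obstacle to be the careful bookkeeping in the invariance step rather than any deep difficulty: one must verify that the selection mechanism and the quantile events transform consistently, that continuity of the marginals renders ties negligible so that strict versus non-strict inequalities are immaterial, and that the copula of a jointly Gaussian vector indeed coincides with the Gaussian copula of the same correlation parameter (because standardising each Gaussian marginal preserves correlation). Everything else reduces to the elementary variance computation above.
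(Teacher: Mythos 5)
Your proposal is correct and takes essentially the same route as the paper: both arguments reduce the Gaussian copula model, via invariance of the success probability under strictly increasing transformations of the marginals, to a canonical bivariate standard Gaussian pair with correlation $\rho$, and then identify the additive model of Problem \ref{prob:additive-noise}\ref{prob:additive-noise-part-b} with that pair by computing $\operatorname{Cov}\left(X_{i} + Y_{i}, X_{i}\right)/\sqrt{\operatorname{Var}\left(Z_{i}\right)\operatorname{Var}\left(X_{i}\right)} = 1/\sqrt{1 + \xi^{2}}$ and inverting to get $\xi^{2} = 1/\rho^{2} - 1$. The only difference is one of detail: you make explicit (via Sklar's theorem, the tie-negligibility argument, and the transformation of the quantile events) the invariance step that the paper simply asserts in one sentence.
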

\begin{proof}
Firstly, note that a bivariate Gaussian distribution clearly has a Gaussian copula. Secondly, due to the specification from Problem \ref{prob:ord_opt}, performing separate univariate monotone transformations on each of $Z_{i}$ and $X_{i}$ will not affect $p_{\mathrm{success}}$, since the definition of the success probability only relies on ordinal selection of the $Z_{i}$, and the percentiles of $X_{i}$. Under a Gaussian copula model, we can assume without loss of generality that $\left(Z_{i}, X_{i}\right)$ are bivariate standard Gaussian with correlation $\rho$. Then we just need to show that additive noise Problem \ref{prob:additive-noise}\ref{prob:additive-noise-part-b} also admits a Gaussian copula representation with $\xi^{2} = 1/\rho^{2} - 1$. Since the sum $Z_{i} = X_{i} + Y_{i}$ will also be Gaussian, then $\left(Z_{i}, X_{i}\right)$ are bivariate Gaussian with correlation that can be computed by
\begin{align}
\rho &= \dfrac{\operatorname{Cov}\left(Z_{i}, X_{i}\right)}{\sqrt{\operatorname{Var}\left(Z_{i}\right)\operatorname{Var}\left(X_{i}\right)}} \\
&= \dfrac{\operatorname{Cov}\left(X_{i} + Y_{i}, X_{i}\right)}{\sqrt{1 + \xi^{2}}} \\
&=  \dfrac{1}{\sqrt{1 + \xi^{2}}}.
\end{align}
After inverting, we get $\xi^{2} = 1/\rho^{2} - 1$.
\end{proof}

Due to the equality in \eqref{eq:special_case}, Problem \ref{prob:copula} immediately enjoys the results from Section \ref{sec:prelim}. Under the Gaussian copula model, we do not explicitly require the marginal distributions $X_{i}$ and $Z_{i}$ to be known, but we can also assume without loss of generality that $X_{i} \sim \mathcal{N}\left(0, 1\right)$ and that $Z_{i}$ is obtained via independent additive Gaussian noise $Y_{i} \sim \mathcal{N}\left(0, \xi^{2}\right)$, hence we have the trivariate Gaussian representation
\begin{equation}
\begin{bmatrix}
X_{i} \\ Y_{i} \\ Z_{i}
\end{bmatrix} \sim \mathcal{N}\left(\begin{bmatrix}
0 \\ 0 \\ 0
\end{bmatrix}, \begin{bmatrix}
1 & 0 & 1 \\ 0 & \xi^{2} & \xi^{2} \\ 1 & \xi^{2} & 1 + \xi^{2}
\end{bmatrix}\right).
\label{eq:bivariate-gaussian}
\end{equation}
Recognise that the covariance matrix in \eqref{eq:bivariate-gaussian} is not full rank, primarily because of the relation $Z_{i} = X_{i} + Y_{i}$. A reduced-dimension equivalent representation (with respect to computing $p_{\mathrm{success}}$) is the bivariate Gaussian
\begin{equation}
\begin{bmatrix}
X_{i} \\ Z_{i}
\end{bmatrix} \sim \mathcal{N}\left(\begin{bmatrix}
0 \\ 0
\end{bmatrix}, \begin{bmatrix}
1 & \rho  \\ \rho & 1
\end{bmatrix}\right)
\label{eq:bivariate-gaussian-copula}
\end{equation}
that arises from the bivariate Gaussian copula model.

\subsection{Approximation Formula}
\label{sec:gaussian_approximation}

Motivated by the difficulty in evaluating $p_{\mathrm{success}}$ using Proposition \ref{prop:success-prob}, we develop a computationally tractable approximation for $p_{\mathrm{success}}$ under the Gaussian copula model. From Theorem \ref{thm:special_case} and the representation \eqref{eq:bivariate-gaussian-copula}, the success probability may be written as
\begin{equation}
p_{\mathrm{success}}^{\mathrm{G}} = \operatorname{Pr}\left(\min\left\{X_{\left\langle 1\right\rangle}, \dots, X_{\left\langle m\right\rangle}\right\} \leq \Phi^{-1}\left(\alpha\right)\right).
\end{equation}
Note the symmetry properties of the zero-mean Gaussian and its order statistics, i.e. we can relate the lower and upper extreme order statistics by
\begin{equation}
\left(Z_{1:n}, \dots, Z_{m:n}\right) \underset{\mathrm{st}}{=} \left(-Z_{n:n}, \dots, -Z_{\left(n - m + 1\right):n}\right).
\label{eq:symmetry_ord_stat}
\end{equation}
Then it follows that Problem \ref{prob:copula}\ref{prob:copula-part-b} is symmetric, in the sense
\begin{equation}
\operatorname{Pr}\left(\min\left\{X_{\left\langle 1\right\rangle}, \dots, X_{\left(m\right\rangle}\right\} \leq \Phi^{-1}\left(\alpha\right)\right) = \operatorname{Pr}\left(\max\left\{X_{\left\langle n - m + 1\right\rangle}, \dots, X_{\left\langle n\right\rangle}\right\} \geq \Phi^{-1}\left(1 - \alpha\right)\right).
\end{equation}
Hence, we develop the approximation of $p_{\mathrm{success}}$ using the upper variables $X_{\left\langle n - m + 1\right\rangle}$, $\dots$, $X_{\left\langle n\right\rangle}$ instead of the lower variables $X_{\left\langle 1\right\rangle}$, $\dots$, $X_{\left\langle m\right\rangle}$ (analogous to considering a maximisation problem rather than a minimisation problem). As will be clear, the resulting approximation involves the usual multivariate Gaussian CDF rather than the complementary CDF, for which software implementations of the former are more prevalent. Our approach is to approximate the joint extreme order statistics of the standard Gaussian $Z$ from representation \eqref{eq:bivariate-gaussian-copula} with a multivariate Gaussian. The distribution of the random vector $\mathbf{Z}' = \left(Z_{\left(n - m + 1\right):n}, \dots, Z_{n:n}\right)$ is chosen to be approximated with the distribution of
\begin{equation}
\widehat{\mathbf{Z}}' \sim \mathcal{N}\left(\boldsymbol{\mu}_{\widehat{\mathbf{Z}}'} , C_{\widehat{\mathbf{Z}}'} \right),
\label{eq:asymp_approx_distribution}
\end{equation}
with mean vector
\begin{equation}
\boldsymbol{\mu}_{\widehat{\mathbf{Z}}'} = \begin{bmatrix} \Phi^{-1}\left(p_{1}\right) &  \dots & \Phi^{-1}\left(p_{m}\right) \end{bmatrix}^{\top}
\label{eq:approx-mean-vector}
\end{equation}
and covariance structure
\begin{equation}
\left[C_{\widehat{\mathbf{Z}}'}\right]_{ij} = \dfrac{p_{i}\left(1-p_{j}\right)}{n\phi\left(\Phi^{-1}\left(p_{i}\right)\right)\phi\left(\Phi^{-1}\left(p_{j}\right)\right)}, \quad i \leq j,
\label{eq:approx-covariance-structure}
\end{equation}
where $p_{1} = \frac{n - m}{n}, \dots, p_{m} = \frac{n - 1}{n}$. This approximation is justified by the asymptotic normality of joint central order statistics \cite[Theorem 8.5.2]{Arnold2008}, with which we subsequently approximate the extreme order statistics. Note that the practice of approximating order statistics using asymptotic theory is not uncommon \cite{Reiss1989}. From \eqref{eq:bivariate-gaussian-copula}, the conditional distribution of $\mathbf{X}' = \left(X_{\left\langle n - m + 1\right\rangle}, \dots, X_{\left\langle n\right\rangle}\right)$ given $\mathbf{Z}'$ can be computed using well-known Gaussian conditioning formulae \cite[Eq. (A.6)]{Rasmussen2006} to be the Gaussian
\begin{equation}
\left[\mathbf{X}'\middle|\mathbf{Z}' = \mathbf{z}\right] \sim \mathcal{N}\left(\rho \mathbf{z}, \left(1 - \rho^{2}\right)I\right).
\label{eq:gaussian-conditioning-rho}
\end{equation}
With a Gaussian approximation for $\mathbf{Z}'$ and a Gaussian form for $\mathbf{X}'$ conditioned on $\mathbf{Z}'$, one can analytically marginalise out $\mathbf{Z}'$ with well-known formulae \cite[Eq. (1.11)]{Ristic2004} and obtain a Gaussian approximation for $\mathbf{X}'$ . Hence $\mathbf{X}'$ is approximated with a multivariate Gaussian vector $\widehat{\mathbf{X}}' \sim \mathcal{N}\left(\boldsymbol{\mu}_{\widehat{\mathbf{X}}'}, C_{\widehat{\mathbf{X}}'}\right)$, where
\begin{gather}
\boldsymbol{\mu}_{\widehat{\mathbf{X}}'} = \rho\boldsymbol{\mu}_{\widehat{\mathbf{Z}}'} \label{eq:marginalisation-mean} \\
C_{\widehat{\mathbf{X}}'} = \rho^{2}C_{\widehat{\mathbf{Z}}'} + \left(1 - \rho^{2}\right)I. \label{eq:marginalisation-covariance}
\end{gather}
We denote the CDF of this multivariate Gaussian by $\boldsymbol{\Phi}_{\boldsymbol{\mu}_{\widehat{\mathbf{X}}'}, C_{\widehat{\mathbf{X}}'}}\left(\cdot\right)$. The success probability is given by
\begin{align}
p_{\mathrm{success}}^{\mathrm{G}}\left(n, m, \alpha, \rho\right) &= \operatorname{Pr}\left(\max\left\{X_{\left\langle n - m + 1\right\rangle}, \dots, X_{\left\langle n\right\rangle}\right\} \geq \Phi^{-1}\left(1 - \alpha\right)\right) \\
&= 1 - \operatorname{Pr}\left(\mathbf{X}' \leq \Phi^{-1}\left(1 - \alpha\right)\mathbf{1}\right).
\end{align}
Using the Gaussian approximation $\widehat{\mathbf{X}}' \sim \mathcal{N}\left(\boldsymbol{\mu}_{\widehat{\mathbf{X}}'}, C_{\widehat{\mathbf{X}}'}\right)$ for $\mathbf{X}'$, the right-hand side of the above equation is approximated as
\begin{align}
\widehat{p}_{\mathrm{success}}\left(n, m, \alpha, \rho\right) &:= 1 - \operatorname{Pr}\left(\widehat{\mathbf{X}}' \leq \Phi^{-1}\left(1 - \alpha\right)\mathbf{1}\right) \\
&= 1 - \boldsymbol{\Phi}_{\boldsymbol{\mu}_{\widehat{\mathbf{X}}'}, C_{\widehat{\mathbf{X}}'}}\left(\Phi^{-1}\left(1 - \alpha\right)\mathbf{1}\right).
\label{eq:asymp_approx}
\end{align}
Computing this approximation formula is of time complexity $O\left(m^{2}\right)$. This is due to the construction of the $m\times m$ covariance matrix $C_{\widehat{\mathbf{Z}}'}$, and marginalisation of Gaussians in \eqref{eq:marginalisation-mean} and \eqref{eq:marginalisation-covariance} will take only $O\left(m^{2}\right)$ operations. Then, evaluation of the multivariate Gaussian CDF using the algorithms from \cite{Genz2002} are at most $O\left(m^{2}\right)$. We later illustrate that this formula yields a reasonable approximation.

\subsection{Success Probability Lower Bound}

Motivated by our approximation formula, we rigorously study Gaussian approximations of the form \eqref{eq:asymp_approx_distribution}, which allow us to analytically marginalise when approximating the success probability. The following lemma provides a sufficient condition for such an approximation to yield a lower bound on $p_{\mathrm{success}}$ in Problem \ref{prob:copula}\ref{prob:copula-part-b}. The result uses the following notion of stochastic dominance.
\begin{definition}[Multivariate stochastic dominance \cite{Shaked2007}]
We say that random vector $\mathbf{X}_{1} \in \mathbb{R}^{n}$ is stochastically dominated by random vector $\mathbf{X}_{2} \in \mathbb{R}^{n}$ and denote $\mathbf{X}_{1} \underset{\mathrm{st}}{\preceq} \mathbf{X}_{2}$ if and only if $\mathbb{E}\left[u\left(\mathbf{X}_{1}\right)\right] \leq \mathbb{E}\left[u\left(\mathbf{X}_{2}\right)\right]$ for all weakly increasing (i.e. non-decreasing) functions $u: \mathbb{R}^{n} \to \mathbb{R}$. Equivalently, we can say $\mathbf{X}_{1} \underset{\mathrm{st}}{\preceq} \mathbf{X}_{2}$ if and only if $\operatorname{Pr}\left(\mathbf{X}_{1} \in \mathbb{U}\right) \leq \operatorname{Pr}\left(\mathbf{X}_{2} \in \mathbb{U}\right)$ for all upper sets $\mathbb{U}$ (an upper set may be defined as a set which satisfies $\mathbf{x}_{2} \in \mathbb{U}$ for all $\mathbf{x}_{2} \geq \mathbf{x}_{1} \in \mathbb{U}$).
\label{def:stoch-dom}
\end{definition}
\begin{lemma}[Sufficient condition for lower bound]
Consider Problem \ref{prob:copula}\ref{prob:copula-part-b}. Let $\mathbf{Z}$ be a random vector for the first $m$ order statistics of $Z_{1}, \dots, Z_{n}$. Let $\widehat{\mathbf{Z}}$ be an arbitrary Gaussian approximation to $\mathbf{Z}$, of the form \eqref{eq:asymp_approx_distribution}. Suppose we have that $\mathbf{Z} \underset{\mathrm{st}}{\preceq} \widehat{\mathbf{Z}}$. Then the approximation computed in the same way as \eqref{eq:asymp_approx}, using $\widehat{\mathbf{Z}}'$ constructed symmetrically to $\widehat{\mathbf{Z}}$ as per \eqref{eq:symmetry_ord_stat}, yields a lower bound
\begin{equation}
\widehat{p}_{\mathrm{success}}\left(n, m, \alpha, \rho\right) \leq p_{\mathrm{success}}^{\mathrm{G}}\left(n, m, \alpha, \rho\right).
\end{equation}
\label{lem:lower-bound-uo}
\end{lemma}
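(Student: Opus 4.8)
\emph{Proof proposal.}

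The plan is to reduce the lemma to a single structural fact: that multivariate stochastic dominance is preserved when two random vectors are pushed through the common Gaussian conditioning kernel of \eqref{eq:gaussian-conditioning-rho}. By Theorem \ref{thm:special_case} we may work with the representation \eqref{eq:bivariate-gaussian-copula}, in which the concomitants $\mathbf{X}'=(X_{\langle n-m+1\rangle},\dots,X_{\langle n\rangle})$ of the upper order statistics $\mathbf{Z}'$ satisfy $[\mathbf{X}'\mid\mathbf{Z}'=\mathbf{z}]\sim\mathcal{N}(\rho\mathbf{z},(1-\rho^{2})I)$, and $\widehat{\mathbf{X}}'$ is obtained by pushing the Gaussian $\widehat{\mathbf{Z}}'$ through this same kernel. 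Writing $c=\Phi^{-1}(1-\alpha)$, both $p_{\mathrm{success}}^{\mathrm{G}}$ and $\widehat{p}_{\mathrm{success}}$ have the form $1-\operatorname{Pr}(\,\cdot\,\leq c\mathbf{1})$ evaluated at $\mathbf{X}'$ and $\widehat{\mathbf{X}}'$ respectively, so it suffices to show $\operatorname{Pr}(\widehat{\mathbf{X}}'\leq c\mathbf{1})\geq\operatorname{Pr}(\mathbf{X}'\leq c\mathbf{1})$. Since $\{\mathbf{x}:\mathbf{x}\leq c\mathbf{1}\}$ is a lower set, this inequality will follow from $\widehat{\mathbf{X}}'\underset{\mathrm{st}}{\preceq}\mathbf{X}'$ via the upper-set characterisation in Definition \ref{def:stoch-dom} (applied to the complementary upper set).

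The core step is to establish that the conditioning kernel is stochastically monotone and hence preserves dominance. For any weakly increasing $u:\mathbb{R}^{m}\to\mathbb{R}$, I would set $g(\mathbf{z}):=\mathbb{E}\!\left[u\!\left(\rho\mathbf{z}+\sqrt{1-\rho^{2}}\,\boldsymbol{\varepsilon}\right)\right]$ with $\boldsymbol{\varepsilon}\sim\mathcal{N}(\mathbf{0},I)$, so that by the tower property $\mathbb{E}[u(\mathbf{X}')]=\mathbb{E}[g(\mathbf{Z}')]$ and $\mathbb{E}[u(\widehat{\mathbf{X}}')]=\mathbb{E}[g(\widehat{\mathbf{Z}}')]$. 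Because $\rho>0$ (from Problem \ref{prob:copula}), the pointwise coupling $\rho\mathbf{z}_{1}+\sqrt{1-\rho^{2}}\,\boldsymbol{\varepsilon}\leq\rho\mathbf{z}_{2}+\sqrt{1-\rho^{2}}\,\boldsymbol{\varepsilon}$ holds whenever $\mathbf{z}_{1}\leq\mathbf{z}_{2}$; applying $u$ and taking expectation over $\boldsymbol{\varepsilon}$ shows that $g$ is itself weakly increasing. Consequently, for any inputs with $\mathbf{A}\underset{\mathrm{st}}{\preceq}\mathbf{B}$, Definition \ref{def:stoch-dom} applied to the increasing function $g$ gives $\mathbb{E}[g(\mathbf{A})]\leq\mathbb{E}[g(\mathbf{B})]$, and since $u$ was arbitrary the kernel outputs inherit the dominance.

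It then remains to orient the dominance correctly through the symmetry \eqref{eq:symmetry_ord_stat}. Let $T$ denote the map that negates each coordinate and reverses their order, which is precisely the involution realising \eqref{eq:symmetry_ord_stat}, so that $\mathbf{Z}'\underset{\mathrm{st}}{=}T(\mathbf{Z})$ and, by construction, $\widehat{\mathbf{Z}}'=T(\widehat{\mathbf{Z}})$. The map $T$ is an order-reversing bijection, since a coordinate permutation preserves the componentwise order while negation reverses it; a short argument (for increasing $u$ the composite $u\circ T$ is decreasing, so $-u\circ T$ is increasing) shows that $T$ reverses the direction of stochastic dominance. Hence the hypothesis $\mathbf{Z}\underset{\mathrm{st}}{\preceq}\widehat{\mathbf{Z}}$ yields $\widehat{\mathbf{Z}}'=T(\widehat{\mathbf{Z}})\underset{\mathrm{st}}{\preceq}T(\mathbf{Z})=\mathbf{Z}'$. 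Pushing both sides through the monotone kernel of the previous paragraph gives $\widehat{\mathbf{X}}'\underset{\mathrm{st}}{\preceq}\mathbf{X}'$, which combined with the lower-set observation from the first paragraph delivers $\widehat{p}_{\mathrm{success}}\leq p_{\mathrm{success}}^{\mathrm{G}}$.

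I expect the main obstacle to be the careful bookkeeping of dominance directions: the hypothesis is stated for the lower order statistics $\mathbf{Z}$ while the computation is carried out on the reflected upper vector $\mathbf{Z}'$, so one must track the flip induced by the order-reversing reflection $T$ and check that it cancels correctly against the use of a lower set rather than an upper set in the final probability comparison. The kernel-monotonicity step is conceptually the key idea, but it becomes routine once $\rho>0$ secures the pointwise coupling; the delicate part is ensuring that the two order reversals — one from $T$ and one from passing to the lower set $\{\mathbf{x}\leq c\mathbf{1}\}$ — are accounted for so that the final inequality points in the intended direction.
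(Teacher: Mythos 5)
Your proof is correct, and its skeleton is the same as the paper's: propagate stochastic dominance through the Gaussian conditioning kernel \eqref{eq:gaussian-conditioning-rho}, then compare probabilities of a complementary upper set. The differences are in how the two key sub-steps are discharged. The paper works directly on the lower order statistics: it invokes the equal-covariance criterion for multivariate Gaussian dominance \cite[Example 6.B.29]{Shaked2007} to show the conditional laws $\mathcal{N}\left(\rho\mathbf{z},\left(1-\rho^{2}\right)I\right)$ are ordered in $\mathbf{z}$, feeds this into its general chaining result (Lemma \ref{lem:stoch_dom_param_rvects}) to get $\mathbf{X} \underset{\mathrm{st}}{\preceq} \widehat{\mathbf{X}}$, and concludes via $\operatorname{Pr}\left(\mathbf{X} > x_{\alpha}^{*}\mathbf{1}\right) \leq \operatorname{Pr}\left(\widehat{\mathbf{X}} > x_{\alpha}^{*}\mathbf{1}\right)$, leaving the reflection between the lower-side and upper-side formulations of $\widehat{p}_{\mathrm{success}}$ implicit. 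You instead (i) prove kernel monotonicity from scratch by the pointwise coupling $\rho\mathbf{z}_{1}+\sqrt{1-\rho^{2}}\,\boldsymbol{\varepsilon} \leq \rho\mathbf{z}_{2}+\sqrt{1-\rho^{2}}\,\boldsymbol{\varepsilon}$, which in one elementary stroke subsumes both the cited Gaussian dominance criterion and the chaining lemma, and (ii) carry out the argument on the reflected upper vectors, explicitly verifying that the involution $T$ of \eqref{eq:symmetry_ord_stat} reverses stochastic dominance ($u \circ T$ decreasing for $u$ increasing). Your version is more self-contained and makes rigorous a piece of bookkeeping the paper glosses over; the paper's version is shorter and its modular Lemma \ref{lem:stoch_dom_param_rvects} is reused later (in the proof of Theorem \ref{thm:approx-convergence}), which your inlined coupling argument would not provide. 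One cosmetic caveat: when applying the expectation characterisation of dominance, restrict to bounded increasing $u$ (or indicators of upper sets, as in Definition \ref{def:stoch-dom}) so that all expectations exist; this does not affect the validity of your argument.
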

\begin{proof}
Using the necessary and sufficient conditions for stochastic dominance of multivariate Gaussians \cite[Example 6.B.29]{Shaked2007}, and combined with Lemma \ref{lem:stoch_dom_param_rvects} applied to the conditional distribution \eqref{eq:gaussian-conditioning-rho}, since $\mathbf{Z} \underset{\mathrm{st}}{\preceq} \widehat{\mathbf{Z}}$ it follows that $\mathbf{X} \underset{\mathrm{st}}{\preceq} \widehat{\mathbf{X}}$. Thus
\begin{align}
\widehat{p}_{\mathrm{success}}\left(n, m, \alpha, \rho\right) &= \operatorname{Pr}\left(\min\left\{\widehat{\mathbf{X}}\right\} \leq x_{\alpha}^{*}\right) \\
&= 1 - \operatorname{Pr}\left(\widehat{\mathbf{X}} > x_{\alpha}^{*}\mathbf{1}\right) \\
&\leq  1 - \operatorname{Pr}\left(\mathbf{X} > x_{\alpha}^{*}\mathbf{1}\right) \\
&= \operatorname{Pr}\left(\min\left\{\mathbf{X}\right\} \leq x_{\alpha}^{*}\right) \\
&= p_{\mathrm{success}}^{\mathrm{G}}\left(n, m, \alpha, \rho\right).
\end{align}
\end{proof}
It is an open problem whether the proposed approximation formula in Section \ref{sec:gaussian_approximation} with \eqref{eq:approx-mean-vector} and \eqref{eq:approx-covariance-structure} satisfies the stochastic dominance condition of Lemma \ref{lem:lower-bound-uo}. This is owing to the CDF of $\mathbf{Z}$ taking on a complicated form (see \eqref{eq:joint_CDF_order_statistics}), while the CDF of $\widehat{\mathbf{Z}}$ is a multidimensional integral of a multivariate Gaussian density which has no analytical form. This makes it challenging to directly verify stochastic dominance. However, in the case of $m = 1$, it is possible to construct a Gaussian approximation for the first order statistic that is stochastically dominating. The following result presents a class of such approximations.

\begin{lemma}
Let $Z_{1:n}$ denote the first order statistic of an i.i.d. standard Gaussian sample of size $n$. For any $\theta \in \left(0, \frac{\pi}{2}\right)$, let
\begin{gather}
c_{1} = \dfrac{1}{2} - \dfrac{\theta}{\pi}, \\
c_{2} = \dfrac{\cot{\theta}}{\pi - 2\theta}.
\end{gather}
Consider $\widehat{Z}_{1:n} \sim \mathcal{N}\left(\mu_{n}, \sigma_{n}^{2}\right)$ where
\begin{gather}
\mu_{n} = -\sqrt{\dfrac{\log\left(nc_{1}\right)}{c_{2}}} \label{eq:mu-constructed} \\
\sigma_{n}^{2} = \dfrac{-\log\log 2}{2c_{2}\left(\log\left(nc_{1}\right) - \log\log 2\right)}. \label{eq:sigma-constructed}
\end{gather}
Then there exists some integer $n^{*}\left(\theta\right)$ such that for all $n \geq n^{*}\left(\theta\right)$, we have $Z_{1:n} \underset{\mathrm{st}}{\preceq} \widehat{Z}_{1:n}$.
\label{lem:stoch-dom-constructed}
\end{lemma}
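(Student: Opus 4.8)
The plan is to reduce the (a priori multivariate) stochastic dominance of Definition \ref{def:stoch-dom} to its scalar specialisation and then verify a single inequality between survival functions. Since $Z_{1:n}$ and $\widehat{Z}_{1:n}$ are both scalar, $Z_{1:n}\underset{\mathrm{st}}{\preceq}\widehat{Z}_{1:n}$ is equivalent to $\operatorname{Pr}(Z_{1:n} > t)\le\operatorname{Pr}(\widehat{Z}_{1:n} > t)$ for every $t\in\mathbb{R}$. Writing $Z_{1:n}=\min_i Z_i$ with the $Z_i$ i.i.d.\ standard Gaussian, the left survival function is $\operatorname{Pr}(Z_{1:n} > t)=Q(t)^n$, while the right is $Q((t-\mu_n)/\sigma_n)$. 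Thus the whole lemma reduces to showing, once $n$ is large enough,
\begin{equation}
Q(t)^{n}\le Q\!\left(\frac{t-\mu_{n}}{\sigma_{n}}\right)\qquad\text{for all }t\in\mathbb{R}.
\end{equation}

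The second step produces the constants $c_1,c_2$ and a usable one-sided bound on $Q$. Starting from Craig's representation $Q(x)=\frac{1}{\pi}\int_{0}^{\pi/2}\exp(-x^{2}/(2\sin^{2}\phi))\,d\phi$ for $x\ge 0$, I would restrict the integral to $[\theta,\pi/2]$ and apply Jensen's inequality to the convex map $\exp$, using $\int_{\theta}^{\pi/2}\csc^{2}\phi\,d\phi=\cot\theta$ to evaluate the averaged exponent. This produces exactly
\begin{equation}
Q(x)\ge c_{1}\,e^{-c_{2}x^{2}},\qquad c_{1}=\tfrac12-\tfrac{\theta}{\pi},\quad c_{2}=\frac{\cot\theta}{\pi-2\theta},
\end{equation}
and the bound extends trivially to all $x\in\mathbb{R}$ because its right-hand side is at most $c_1<\tfrac12\le Q(x)$ whenever $x\le 0$. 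I would then convert this into an upper bound on the left survival function via $Q(t)=1-Q(-t)$ and $1-u\le e^{-u}$, giving for every $t$
\begin{equation}
Q(t)^{n}=\bigl(1-Q(-t)\bigr)^{n}\le e^{-nQ(-t)}\le \exp\!\bigl(-nc_{1}e^{-c_{2}t^{2}}\bigr)=:H(t).
\end{equation}

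The remaining and decisive step is to prove $H(t)\le Q((t-\mu_n)/\sigma_n)$ for all $t$ and large $n$, and this is where the precise form of $\mu_n,\sigma_n$ enters. The key observation is that, for $t\le 0$ and large $n$, linearising $c_2t^2$ about its value at $\mu_n$ shows $H$ to be close to a Gumbel-type survival function whose location is $\mu_n$ (the level where $H=1/e$, i.e.\ $c_{2}\mu_n^{2}=\log(nc_1)$) and whose median sits at the point $t_0$ with $c_2t_0^2=\log(nc_1)-\log\log 2$; this is precisely the origin of the $\log\log 2$ appearing in $\sigma_n^2$, since $-\log\log 2$ is the median of the standard Gumbel. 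The variance $\sigma_n^2$ is then calibrated so that the Gaussian survival $Q((t-\mu_n)/\sigma_n)$ dominates this bounding survival function. I would prove the domination by splitting $t$ into a bounded crossover window around $\mu_n$ and the two tails. In the tails the inequality is slack and follows from crude rate comparisons: as $t\to+\infty$, $Q(t)^n$ decays like $e^{-nt^2/2}$ whereas the right-hand side decays only like $e^{-(t-\mu_n)^2/(2\sigma_n^2)}$ with $1/\sigma_n^2=O(\log n)\ll n$; as $t\to-\infty$, $1-Q(t)^n\approx nQ(|t|)$ dominates the Gaussian's faster-vanishing lower tail.

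The main obstacle is the crossover window, where both sides are $\Theta(1)$ and the inequality is essentially tangential. There I would lower-bound the right-hand side once more by $c_1e^{-c_2(t-\mu_n)^2/\sigma_n^2}$ (valid on this bounded range) and reduce the claim to the scalar transcendental inequality
\begin{equation}
\frac{c_{2}(t-\mu_{n})^{2}}{\sigma_{n}^{2}}\;\le\;\log c_{1}+nc_{1}e^{-c_{2}t^{2}} ,
\end{equation}
which fails in the tails (hence the need for the separate tail estimates) but is engineered to hold on the crossover window. Showing that the stated $\mu_n,\sigma_n$ keep the slack of this inequality non-negative throughout the window for all $n\ge n^{*}(\theta)$ — by locating the interior stationary point, imposing the tangency/second-order condition, and thereby extracting the explicit threshold $n^{*}(\theta)$ — is the crux of the argument; by contrast the two tail estimates and the Craig–Jensen bound are comparatively routine.
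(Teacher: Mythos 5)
Your reduction to the scalar survival-function inequality \eqref{eq:q-function-stoch-dom}, your Craig--Jensen derivation of $Q(x)\ge c_1e^{-c_2x^2}$ (the paper simply cites this bound from the literature in Lemma \ref{lem:q-function-bounds}), and your double-exponential upper bound $Q(t)^n\le\exp(-nc_1e^{-c_2t^2})$ all match the paper's argument; your positive-tail sketch likewise corresponds to the paper's treatment of $[0,\infty)$. The genuine gap is in the step you yourself call the crux: the crossover window around $\mu_n$ is left as a programme (``locate the stationary point, impose tangency'') rather than a proof, and --- more seriously --- the inequality you propose to verify there cannot hold. Since $c_2\mu_n^2=\log(nc_1)$, at $t=\mu_n$ we have $nc_1e^{-c_2t^2}=1$, so your window inequality $c_2(t-\mu_n)^2/\sigma_n^2\le\log c_1+nc_1e^{-c_2t^2}$ reads $0\le 1+\log c_1$ at the centre of the window. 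This fails for every $n$ whenever $c_1=\frac12-\frac{\theta}{\pi}<e^{-1}$, i.e. for all $\theta>\pi\left(\frac12-\frac1e\right)\approx 0.415$, whereas the lemma is claimed for every $\theta\in\left(0,\frac{\pi}{2}\right)$. The culprit is that you lower-bounded the approximating Gaussian's survival by $c_1e^{-c_2(t-\mu_n)^2/\sigma_n^2}$, which at its own mean equals only $c_1$, while the quantity it must dominate there equals $e^{-1}$; no tangency analysis can repair this.

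What the paper does instead --- and what is missing from your proposal --- is to use a different, Gumbel-type lower bound for the approximating Gaussian on $(-\infty,\mu_n]$: combining $Q(x)\le\frac12e^{-x^2/2}$ (Lemma \ref{lem:q-function-bounds}) with $\log(1-p)\ge -p\log 4$ for $p\in\left[0,\frac12\right]$ (Lemma \ref{lem:log-concave-bounds}) gives
\begin{equation}
Q\!\left(\frac{t-\mu_n}{\sigma_n}\right)\;\ge\;\exp\!\left(-\exp\!\left(-\left(\frac{(t-\mu_n)^2}{2\sigma_n^2}-\log\log 2\right)\right)\right),
\end{equation}
which equals $\frac12\ge e^{-1}$ at $t=\mu_n$. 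Comparing this with the matching upper bound $\exp\!\left(-\exp\!\left(-\left(c_2t^2-\log(nc_1)\right)\right)\right)$ on $Q(t)^n$ turns dominance into a quadratic inequality in $t$, and forcing its discriminant to vanish (after substituting $c_2\mu_n^2=\log(nc_1)$) is exactly what produces the stated $\sigma_n^2$. So the constants are derived from this discriminant computation, not from a tangency condition on your window inequality; your Gumbel-median heuristic for the $\log\log 2$ correctly anticipates the answer but does none of the proving. (For completeness: the paper does use your weak bound, but only on $[\mu_n,0]$, where $nc_1e^{-c_2t^2}\ge1$; even there its uniformity argument is loose near $\mu_n$, which is why Algorithm \ref{alg:test} checks that interval numerically. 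Also, your intermediate claim that $H(t)\le Q((t-\mu_n)/\sigma_n)$ holds for all $t$ is false as $t\to+\infty$, since $H(t)\to1$; you implicitly fix this by reverting to $Q(t)^n$ in the positive tail, but it should be stated as such.)
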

\begin{proof}
See Appendix \ref{sec:proof-stoch-dom-constructed}.
\end{proof}

We also reveal the following property, which expresses monotonicity in the success probability with respect to $m$. This property intuitively says that having a larger selection size lends itself to `more chances' at success.

\begin{lemma}[Monotonicity with respect to selection size]
Given Problem \ref{prob:copula} and the triplet $\left(\bar{n}, \bar{\alpha}, \bar{\rho}\right) \in \mathbb{N} \times \left(0, 1\right) \times \left(0, 1\right)$, then
\begin{equation}
p_{\mathrm{success}}^{\mathrm{G}}\left(\bar{n}, m, \bar{\alpha}, \bar{\rho}\right) \leq p_{\mathrm{success}}^{\mathrm{G}}\left(\bar{n}, m', \bar{\alpha}, \bar{\rho}\right)
\end{equation}
for all $m \in \left[1, n\right)$ and $m' \in \left[m, n\right]$.
\label{lem:monotonicity-m}
\end{lemma}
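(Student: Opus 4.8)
The plan is to reduce the statement to the elementary monotonicity of a probability measure applied to a nested family of events, exploiting the union representation of the success probability in \eqref{eq:ord-opt-defn}. The crucial observation is that, for a fixed realisation of the sample $\left\{\left(Z_{i}, X_{i}\right)\right\}_{i = 1}^{n}$, the ordering $Z_{1:n} \leq \dots \leq Z_{n:n}$ and hence the associated concomitants $X_{\left\langle 1\right\rangle}, \dots, X_{\left\langle n\right\rangle}$ are determined entirely by the data and do not depend on the chosen selection size. Per the definition in Problem \ref{prob:ord_opt}, the $\left\langle i\right\rangle$-indexing simply refers to the $X$ paired with the $i$\textsuperscript{th} smallest $Z$ in the full sample of size $\bar{n}$; changing the selection size from $m$ to $m'$ leaves each random variable $X_{\left\langle i\right\rangle}$ untouched and only enlarges the index range of the union defining the success event.

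First I would write $p_{\mathrm{success}}^{\mathrm{G}}\left(\bar{n}, m, \bar{\alpha}, \bar{\rho}\right) = \operatorname{Pr}\left(A_{m}\right)$, where $A_{m} := \bigcup_{i = 1}^{m}\left\{X_{\left\langle i\right\rangle} \leq x_{\bar{\alpha}}^{*}\right\}$ and $x_{\bar{\alpha}}^{*}$ is the $100\bar{\alpha}$ percentile of the marginal of $X$, which is itself independent of $m$. Since $m' \geq m$, the collection of events $\left\{\left\{X_{\left\langle i\right\rangle} \leq x_{\bar{\alpha}}^{*}\right\}\right\}_{i = 1}^{m}$ is a subfamily of $\left\{\left\{X_{\left\langle i\right\rangle} \leq x_{\bar{\alpha}}^{*}\right\}\right\}_{i = 1}^{m'}$, and a union taken over a larger index set contains the union over the smaller one. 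This yields the set inclusion $A_{m} \subseteq A_{m'}$.

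I would then conclude immediately by monotonicity of the probability measure: $A_{m} \subseteq A_{m'}$ gives $\operatorname{Pr}\left(A_{m}\right) \leq \operatorname{Pr}\left(A_{m'}\right)$, which is precisely the claimed inequality $p_{\mathrm{success}}^{\mathrm{G}}\left(\bar{n}, m, \bar{\alpha}, \bar{\rho}\right) \leq p_{\mathrm{success}}^{\mathrm{G}}\left(\bar{n}, m', \bar{\alpha}, \bar{\rho}\right)$. The degenerate case $m' = m$ is covered trivially with equality, and no appeal to stochastic dominance or to the Gaussian structure is needed.

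I do not anticipate a genuine obstacle: the only point requiring care is the justification that the concomitants $X_{\left\langle i\right\rangle}$ are invariant under a change in selection size, which follows directly from their definition in terms of the ordering of the full sample. It is worth emphasising that this monotonicity relies on nothing specific to the Gaussian copula model, nor on the hypotheses $\bar{\alpha}, \bar{\rho} \in \left(0, 1\right)$; the same set-inclusion argument holds verbatim for the general formulation of Problem \ref{prob:ord_opt}.
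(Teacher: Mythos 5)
Your proposal is correct and follows essentially the same route as the paper's own proof: both express the success probability via the union representation in \eqref{eq:ord-opt-defn} and conclude from the inclusion of the union over $\left\{1, \dots, m\right\}$ in the union over $\left\{1, \dots, m'\right\}$. Your additional remarks --- that the concomitants $X_{\left\langle i\right\rangle}$ are unaffected by the selection size and that no Gaussian-copula structure is used --- merely make explicit what the paper's one-line justification leaves implicit.
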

\begin{proof}
The result follows from applying \eqref{eq:ord-opt-defn}, yielding
\begin{align}
p_{\mathrm{success}}^{\mathrm{G}}\left(\bar{n}, m, \bar{\alpha}, \bar{\rho}\right) &= \operatorname{Pr}\left(\bigcup_{i = 1}^{m}\left\{X_{\left\langle i\right\rangle} \leq x_{\alpha}^{*}\right\}\right) \\
&\leq \operatorname{Pr}\left(\bigcup_{i = 1}^{m'}\left\{X_{\left\langle i\right\rangle} \leq x_{\alpha}^{*}\right\}\right) \\
&= p_{\mathrm{success}}^{\mathrm{G}}\left(\bar{n}, m', \bar{\alpha}, \bar{\rho}\right),
\end{align}
where the inequality follows from $m \leq m'$.
\end{proof}

Combining Lemmas \ref{lem:lower-bound-uo}-\ref{lem:monotonicity-m}, we arrive at the following lower bound on the success probability under the Gaussian copula model.
\begin{theorem}[Success probability lower bound]
Consider Problem \ref{prob:copula}\ref{prob:copula-part-b}. Given $\theta \in \left(0, \frac{\pi}{2}\right)$, there exists some integer $n^{*}\left(\theta\right)$ such that for all $n \geq n^{*}\left(\theta\right)$, $m \in \left[1, n\right]$, $\rho \in \left(0, 1\right]$, $\alpha \in \left(0, 1\right]$
\begin{equation}
p_{\mathrm{success}}^{\mathrm{G}}\left(n, m, \alpha, \rho\right) \geq p_{\mathrm{success}}^{\mathrm{G}}\left(n, 1, \alpha, \rho\right) \geq \Phi\left(\dfrac{\Phi^{-1}\left(\alpha\right) - \rho\mu_{n}\left(\theta\right)}{\sqrt{1 - \rho^{2} + \rho^{2}\left[\sigma_{n}\left(\theta\right)\right]^{2}}}\right).
\label{eq:success-prob-copula-lb}
\end{equation}
Morever, given any $n \in \mathbb{N}$, $m \in \left[1, n\right]$, $\rho \in \left(0, 1\right]$, $\alpha \in \left(0, 1\right]$:
\begin{equation}
p_{\mathrm{success}}^{\mathrm{G}}\left(n, m, \alpha, \rho\right) \geq p_{\mathrm{success}}^{\mathrm{G}}\left(n, 1, \alpha, \rho\right) \geq \sup_{\theta \in \Theta_{n}}\Phi\left(\dfrac{\Phi^{-1}\left(\alpha\right) - \rho\mu_{n}\left(\theta\right)}{\sqrt{1 - \rho^{2} + \rho^{2}\left[\sigma_{n}\left(\theta\right)\right]^{2}}}\right)
\label{eq:success-prob-copula-lb-optimised}
\end{equation}
where $\Theta_{n} \subset \left(0, \frac{\pi}{2}\right)$ is the set of all $\theta$ such that $n \geq n^{*}\left(\theta\right)$, while $\mu_{n}\left(\theta\right)$, $\left[\sigma_{n}\left(\theta\right)\right]^{2}$ are \eqref{eq:mu-constructed}, \eqref{eq:sigma-constructed} respectively but with dependence on $\theta$ explicitly denoted.
\label{thm:constructed-lower-bound}
\end{theorem}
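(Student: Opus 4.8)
The plan is to stitch together the three preceding lemmas, using monotonicity to reduce to the selection size $m=1$ and then using the constructed one-dimensional Gaussian dominator to produce the explicit bound. First I would dispatch the left-hand inequality $p_{\mathrm{success}}^{\mathrm{G}}(n,m,\alpha,\rho) \geq p_{\mathrm{success}}^{\mathrm{G}}(n,1,\alpha,\rho)$ by a direct appeal to Lemma \ref{lem:monotonicity-m}, taking the two selection sizes there to be $1$ and $m$ (legitimate since $1 \leq m$). This isolates the genuine content, namely lower-bounding the single-selection probability $p_{\mathrm{success}}^{\mathrm{G}}(n,1,\alpha,\rho)$.

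For the $m=1$ case I would first write the success probability in the conditional form obtained from \eqref{eq:gaussian-conditioning-rho}: since $[X \mid Z_{1:n}=z] \sim \mathcal{N}(\rho z, 1-\rho^2)$, we have $p_{\mathrm{success}}^{\mathrm{G}}(n,1,\alpha,\rho) = \mathbb{E}\bigl[\Phi\bigl((\Phi^{-1}(\alpha)-\rho Z_{1:n})/\sqrt{1-\rho^2}\bigr)\bigr]$. Because $\rho > 0$, the integrand is a decreasing function of $Z_{1:n}$. Lemma \ref{lem:stoch-dom-constructed} supplies, for each fixed $\theta \in (0,\frac{\pi}{2})$ and every $n \geq n^{*}(\theta)$, a Gaussian $\widehat{Z}_{1:n} \sim \mathcal{N}(\mu_n(\theta), \sigma_n^2(\theta))$ with $Z_{1:n} \underset{\mathrm{st}}{\preceq} \widehat{Z}_{1:n}$, which is exactly the hypothesis needed to invoke Lemma \ref{lem:lower-bound-uo} in its scalar ($m=1$) instance; replacing $Z_{1:n}$ by the stochastically larger $\widehat{Z}_{1:n}$ inside a decreasing expectation can only decrease it, yielding a lower bound. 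It then remains to evaluate that expectation in closed form: marginalising the Gaussian $\widehat{Z}_{1:n}$ through the conditional law — equivalently applying \eqref{eq:marginalisation-mean}--\eqref{eq:marginalisation-covariance} — produces $\widehat{X} \sim \mathcal{N}(\rho\mu_n, 1-\rho^2+\rho^2\sigma_n^2)$, so that $\mathbb{E}[\Phi(\cdot)] = \Phi\bigl((\Phi^{-1}(\alpha)-\rho\mu_n)/\sqrt{1-\rho^2+\rho^2\sigma_n^2}\bigr)$, which is precisely the right-hand side of \eqref{eq:success-prob-copula-lb}.

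Finally, for the optimised bound \eqref{eq:success-prob-copula-lb-optimised} I would observe that the argument above is valid for every $\theta$ with $n \geq n^{*}(\theta)$, i.e. every $\theta \in \Theta_n$; since each such $\theta$ yields a genuine lower bound, so does the supremum over $\Theta_n$, and the monotonicity step carries through for arbitrary $n$. The main delicacy I expect is purely one of orientation and sign-bookkeeping: the approximation \eqref{eq:asymp_approx} is framed through the upper order statistics via the reflection \eqref{eq:symmetry_ord_stat}, whereas the clean $m=1$ computation is most transparent when phrased directly in terms of the minimum, so I would take care that the decreasing-versus-increasing direction of stochastic dominance and the identities $1-\Phi(x)=\Phi(-x)$, $\Phi^{-1}(1-\alpha)=-\Phi^{-1}(\alpha)$ are applied consistently. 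Beyond that, all the substantive difficulty has already been absorbed into Lemma \ref{lem:stoch-dom-constructed}, so the remainder is routine marginalisation.
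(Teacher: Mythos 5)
Your proposal is correct and takes essentially the same route as the paper's own proof: Lemma \ref{lem:monotonicity-m} handles the left inequality, the constructed dominator of Lemma \ref{lem:stoch-dom-constructed} is fed through the stochastic-dominance condition of Lemma \ref{lem:lower-bound-uo} (specialised to $m=1$, with Gaussian marginalisation as in \eqref{eq:marginalisation-mean}--\eqref{eq:marginalisation-covariance}) for the right inequality, and the optimised bound follows by taking the supremum over $\Theta_{n}$. Your explicit computation $p_{\mathrm{success}}^{\mathrm{G}}\left(n,1,\alpha,\rho\right) = \mathbb{E}\bigl[\Phi\bigl(\bigl(\Phi^{-1}\left(\alpha\right)-\rho Z_{1:n}\bigr)/\sqrt{1-\rho^{2}}\bigr)\bigr]$ simply spells out in closed form what the paper cites tersely as ``a univariate special case of the approximation scheme''.
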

\begin{proof}
The right inequality in \eqref{eq:success-prob-copula-lb} occurs as a univariate special case of the approximation scheme \eqref{eq:asymp_approx}, in conjunction with the constructed stochastically dominating approximation in Lemma \ref{lem:stoch-dom-constructed} satisfying the conditions of Lemma \ref{lem:lower-bound-uo}. The left inequality in \eqref{eq:success-prob-copula-lb} is a result of monotonicity in $m$ from Lemma \ref{lem:monotonicity-m}. The inequalities in \eqref{eq:success-prob-copula-lb-optimised} follow naturally from the class of inequalities in \eqref{eq:success-prob-copula-lb}, by directly taking the supremum with respect to $\theta$.
\end{proof}
\begin{remark}
It is worthwhile to consider the smallest integer $n^{*}\left(\theta\right)$ such that \eqref{eq:success-prob-copula-lb} is valid. It is clear that we must have $n^{*}\left(\theta\right) > 1/c_{1}$, otherwise it possibly allows for $\log\left(nc_{1}\right) < 0$ in \eqref{eq:mu-constructed} and \eqref{eq:sigma-constructed}. Given $n$ and $\theta$, one can numerically confirm whether $n \geq n^{*}\left(\theta\right)$, using conditions from the proof of Lemma \ref{lem:stoch-dom-constructed}. We have empirically observed that $n^{*}\left(\theta\right)$ can be quite small, i.e. we can usually accept $n^{*}\left(\theta\right) = \left\lceil 1/c_{1}\right\rceil$. Using this numerical test, the optimised lower bound \eqref{eq:success-prob-copula-lb-optimised} can also be implemented via a numerical optimisation algorithm, noting that we need only conduct search over a univariate bounded region. Further discussion and pseudocode for these implementations can be found in Appendix \ref{sec:algorithms}.
\label{rem:numerical-check}
\end{remark}

We are now also equipped to state a characterisation on the monotonicity and convergence of $p_{\mathrm{success}}$ with respect to $n$. 

\begin{theorem}[Monotonicity and convergence with respect to sample size]
Consider Problem \ref{prob:copula}\ref{prob:copula-part-b}. Given the triplet $\left(\bar{m}, \bar{\alpha}, \bar{\rho}\right) \in \mathbb{N} \times \left(0, 1\right] \times \left(0, 1\right]$, then:
\begin{enumerate}[label=(\alph*)]
\item
for all $n \leq n'$ such that $n' \geq \bar{m}$ and $n \in \left[\bar{m}, n'\right]$, we have
\begin{equation}
p_{\mathrm{success}}^{\mathrm{G}}\left(n, \bar{m}, \bar{\alpha}, \bar{\rho}\right) \leq p_{\mathrm{success}}^{\mathrm{G}}\left(n', \bar{m}, \bar{\alpha}, \bar{\rho}\right),
\end{equation}\label{thm:approx-convergence-part-a}
\item moreover,
\begin{equation}
\lim_{n \to \infty}p_{\mathrm{success}}^{\mathrm{G}}\left(n, \bar{m}, \bar{\alpha}, \bar{\rho}\right) = 1.
\end{equation}\label{thm:approx-convergence-part-b}
\end{enumerate}
\label{thm:approx-convergence}
\end{theorem}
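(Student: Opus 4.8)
My plan is to handle the two parts by quite different means. For part (a), the monotonicity in the sample size $n$, I would work from the bivariate standard Gaussian representation \eqref{eq:bivariate-gaussian-copula} and reduce the claim to a stochastic-ordering property of order statistics. For part (b), the convergence to $1$, I would simply invoke the lower bound already established in Theorem \ref{thm:constructed-lower-bound} and compute its limit as $n \to \infty$.

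First, for part (a), I would set up a convenient representation. Writing $X_i = \rho Z_i + \sqrt{1 - \rho^2}\,W_i$ with $W_i \sim \mathcal{N}(0,1)$ independent of $Z_i$, the pairs $(Z_i, W_i)$ are i.i.d.\ across $i$, so $(W_1, \dots, W_n)$ is i.i.d.\ standard Gaussian and independent of $(Z_1, \dots, Z_n)$. Since the ordinal selection depends on the $Z_i$ alone, the residuals attached to the selected pairs remain i.i.d.\ standard Gaussian and independent of the order statistics; hence the selected values admit the exact representation $X_{\langle i \rangle} = \rho Z_{i:n} + \sqrt{1 - \rho^2}\,W_i'$ for $i = 1, \dots, m$, where $W_1', \dots, W_m'$ are i.i.d.\ $\mathcal{N}(0,1)$ independent of $(Z_{1:n}, \dots, Z_{m:n})$. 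Conditioning on the order statistics, the failure event $\{X_{\langle i\rangle} > x_\alpha^* \text{ for all } i\}$ factorises, giving
\[
\operatorname{Pr}\!\left(\text{failure} \,\middle|\, Z_{1:n}, \dots, Z_{m:n}\right) = \prod_{i=1}^{m} \Phi\!\left(\frac{\rho Z_{i:n} - x_\alpha^*}{\sqrt{1 - \rho^2}}\right) =: h\!\left(Z_{1:n}, \dots, Z_{m:n}\right).
\]
Because $\rho > 0$, each factor is a nondecreasing function of its argument and lies in $[0,1]$, so $h$ is a coordinatewise nondecreasing map $\mathbb{R}^{m} \to \mathbb{R}$, and $p_{\mathrm{success}}^{\mathrm{G}} = 1 - \mathbb{E}\big[h(Z_{1:n}, \dots, Z_{m:n})\big]$. (The boundary case $\bar\rho = 1$, where $X_i = Z_i$, is immediate, since then failure has probability $(1 - \bar\alpha)^{n}$, which is decreasing in $n$.)

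With this in hand, monotonicity reduces to showing that the vector of the first $m$ order statistics is stochastically decreasing in the sample size, i.e.\ $(Z_{1:n'}, \dots, Z_{m:n'}) \underset{\mathrm{st}}{\preceq} (Z_{1:n}, \dots, Z_{m:n})$ for $n' \geq n \geq \bar m$, since then the nondecreasing test function $h$ makes $\mathbb{E}[h]$ smaller at $n'$ and hence yields a larger success probability. I would prove this ordering by coupling: realise the size-$n$ sample as the first $n$ draws of the size-$n'$ sample, so that adjoining points can only push each order statistic down, giving $Z_{j:n'} \leq Z_{j:n}$ almost surely for every $j \leq n$, and therefore the claimed multivariate dominance directly from the expectation form of $\underset{\mathrm{st}}{\preceq}$ in Definition \ref{def:stoch-dom}. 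I expect this order-statistic coupling, together with the verification that $h$ is genuinely coordinatewise nondecreasing, to be the main (though modest) technical obstacle of the whole argument.

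For part (b), I would fix any $\theta \in (0, \tfrac{\pi}{2})$ and apply Theorem \ref{thm:constructed-lower-bound}, using Lemma \ref{lem:monotonicity-m} to pass from $\bar m$ down to $m = 1$, so that for all sufficiently large $n$
\[
p_{\mathrm{success}}^{\mathrm{G}}(n, \bar m, \bar\alpha, \bar\rho) \geq \Phi\!\left(\frac{\Phi^{-1}(\bar\alpha) - \bar\rho\,\mu_n(\theta)}{\sqrt{1 - \bar\rho^2 + \bar\rho^2\,[\sigma_n(\theta)]^2}}\right).
\]
It then remains to evaluate the limit of the right-hand side. From \eqref{eq:mu-constructed} and \eqref{eq:sigma-constructed}, as $n \to \infty$ we have $\mu_n(\theta) \to -\infty$ and $[\sigma_n(\theta)]^2 \to 0$, so the numerator tends to $+\infty$ while the denominator stays bounded away from $0$ (or, in the degenerate case $\bar\rho = 1$, tends to $0$ more slowly than the numerator grows); in either case the argument of $\Phi$ tends to $+\infty$ and the bound tends to $1$. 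Since $p_{\mathrm{success}}^{\mathrm{G}} \leq 1$ always holds, a squeeze gives the stated limit. This part is essentially a routine asymptotic calculation once the lower bound is granted.
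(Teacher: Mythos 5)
Your proposal is correct and follows essentially the same route as the paper: part (a) establishes stochastic dominance of the first $m$ order statistics as $n$ grows via a coupling argument (the paper's Lemma \ref{lem:ord-stat-stoch-dom-increasing-n} deletes a point at random, you augment the sample, which is equivalent) and then transfers that dominance to the selected $X$-values through the Gaussian conditional \eqref{eq:gaussian-conditioning-rho}, while part (b) takes the limit of the lower bound in Theorem \ref{thm:constructed-lower-bound} exactly as the paper does. Your explicit factorisation of the conditional failure probability into the coordinatewise nondecreasing function $h$ is just an inlined, concrete instance of the paper's Lemma \ref{lem:stoch_dom_param_rvects} applied to the upper set $\left\{\mathbf{x} : \mathbf{x} > x_{\alpha}^{*}\mathbf{1}\right\}$, so the difference is presentational rather than substantive.
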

\begin{proof}
To prove part \ref{thm:approx-convergence-part-a}, it suffices to show that
\begin{equation}
p_{\mathrm{success}}^{\mathrm{G}}\left(n, m, \alpha, \rho\right) \leq p_{\mathrm{success}}^{\mathrm{G}}\left(n + 1, m, \alpha, \rho\right).
\end{equation}
Let $\mathbf{Z}$ denote the first $m$ order statistics of $Z$ with sample size $n$, and let $\widetilde{\mathbf{Z}}$ denote the first $m$ order statistics of $Z$ with sample size $n + 1$ instead. From Lemma \ref{lem:ord-stat-stoch-dom-increasing-n}, we have $\widetilde{\mathbf{Z}} \underset{\mathrm{st}}{\preceq} \mathbf{Z}$. Let $\mathbf{X} = \left(X_{\left\langle 1 \right\rangle}, \dots, X_{\left\langle m \right\rangle}\right)$ be corresponding $X$-values with sample size $n$, and let $\widetilde{\mathbf{X}}$ be the analogous random vector with sample size $n + 1$ instead. Using analogous arguments as in the proof of Lemma \ref{lem:lower-bound-uo}, it follows that $\widetilde{\mathbf{X}} \underset{\mathrm{st}}{\preceq} \mathbf{X}$. Thus
\begin{align}
p_{\mathrm{success}}^{\mathrm{G}}\left(n, m, \alpha, \rho\right) &= 1 - \operatorname{Pr}\left(\mathbf{X} > x_{\alpha}^{*}\mathbf{1}\right) \\
&\leq  1 - \operatorname{Pr}\left(\widetilde{\mathbf{X}} > x_{\alpha}^{*}\mathbf{1}\right) \\
&= p_{\mathrm{success}}^{\mathrm{G}}\left(n + 1, m, \alpha, \rho\right).
\end{align}
Part \ref{thm:approx-convergence-part-b} is evident from the lower bound in \eqref{eq:success-prob-copula-lb}, since
\begin{equation}
\lim_{n \to \infty}\Phi\left(\dfrac{\Phi^{-1}\left(\alpha\right) - \rho\mu_{n}}{\sqrt{1 - \rho^{2} + \rho^{2}\sigma_{n}^{2}}}\right) = 1.
\end{equation}
\end{proof}
In light of Theorem \ref{thm:approx-convergence}, we can also guarantee that we can attain an arbitrarily high probability of success, by letting $n$ become sufficiently large.
\begin{corollary}
Given the triplet $\left(\bar{m}, \bar{\alpha}, \bar{\rho}\right) \in \mathbb{N} \times \left(0, 1\right] \times \left(0, 1\right]$, and for any $\delta \in \left(0, 1\right]$, there exists an $\bar{n}\left(\bar{\alpha}, \bar{\rho}, \delta\right) < \infty$ such that
\begin{equation}
p_{\mathrm{success}}^{\mathrm{G}}\left(n, \bar{m}, \bar{\alpha}, \bar{\rho}\right) \geq 1 - \delta
\end{equation}
for all $n \geq \bar{n}$.
\label{coro:guarantee}
\end{corollary}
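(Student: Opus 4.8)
The plan is to obtain the corollary as an immediate consequence of the lower bound in Theorem \ref{thm:constructed-lower-bound} together with the limiting behaviour recorded in Theorem \ref{thm:approx-convergence}(b). The guiding observation is that the explicit scalar bound in \eqref{eq:success-prob-copula-lb} carries no dependence on the selection size $m$; routing the argument through it is precisely what will let me produce a threshold $\bar{n}$ depending only on $\bar{\alpha}$, $\bar{\rho}$ and $\delta$, as the statement demands.

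First I would fix an arbitrary $\theta \in (0, \pi/2)$ and combine Lemma \ref{lem:monotonicity-m} with \eqref{eq:success-prob-copula-lb}: for every $n \geq n^{*}(\theta)$ we have $p_{\mathrm{success}}^{\mathrm{G}}(n, \bar{m}, \bar{\alpha}, \bar{\rho}) \geq p_{\mathrm{success}}^{\mathrm{G}}(n, 1, \bar{\alpha}, \bar{\rho}) \geq \Phi\big((\Phi^{-1}(\bar{\alpha}) - \bar{\rho}\mu_{n}(\theta))/\sqrt{1 - \bar{\rho}^{2} + \bar{\rho}^{2}[\sigma_{n}(\theta)]^{2}}\big)$. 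This reduces the task to exhibiting an $\bar{n}$ beyond which this last scalar quantity exceeds $1 - \delta$. Analysing it as $n \to \infty$, the expressions \eqref{eq:mu-constructed} and \eqref{eq:sigma-constructed} give $\mu_{n}(\theta) = -\sqrt{\log(n c_{1})/c_{2}} \to -\infty$ and $[\sigma_{n}(\theta)]^{2} \to 0^{+}$, so for any fixed $\bar{\rho} \in (0, 1]$ the argument of $\Phi$ tends to $+\infty$ and the bound converges to $1$; this is exactly the computation already invoked in the proof of Theorem \ref{thm:approx-convergence}(b). By monotonicity of $\Phi$ and the definition of a limit, there is a threshold $\bar{n} \geq n^{*}(\theta)$, depending only on $\bar{\alpha}$, $\bar{\rho}$, $\delta$ and the fixed $\theta$, such that the bound is at least $1 - \delta$ for all $n \geq \bar{n}$. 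Chaining the inequalities then gives $p_{\mathrm{success}}^{\mathrm{G}}(n, \bar{m}, \bar{\alpha}, \bar{\rho}) \geq 1 - \delta$ for all $n \geq \bar{n}$, finite since $\bar{n} < \infty$.

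There is no substantive obstacle: this is a routine $\varepsilon$--$N$ extraction from a limit that has already been verified. The only point needing care is the independence of $\bar{n}$ from $\bar{m}$. A bare appeal to the limit in Theorem \ref{thm:approx-convergence}(b), using that the sequence is bounded above by $1$, would also yield a valid threshold, but a priori one depending on $\bar{m}$; passing through the $m$-free lower bound \eqref{eq:success-prob-copula-lb} is therefore the preferable route for matching the stated dependence $\bar{n}(\bar{\alpha}, \bar{\rho}, \delta)$. I would also note in passing that any admissible $\theta$ works, so the cleanest presentation simply fixes one such $\theta$ at the outset rather than optimising over $\Theta_{n}$ as in \eqref{eq:success-prob-copula-lb-optimised}.
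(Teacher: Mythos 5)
Your proposal is correct and follows essentially the same route as the paper, which states the corollary as an immediate consequence of Theorem \ref{thm:approx-convergence}, whose convergence part \ref{thm:approx-convergence-part-b} is itself established via the same $m$-free lower bound \eqref{eq:success-prob-copula-lb} that you invoke. Your explicit observation that routing through this bound (rather than a bare $\varepsilon$--$N$ appeal to the limit of $p_{\mathrm{success}}^{\mathrm{G}}$) is what makes $\bar{n}$ independent of $\bar{m}$ is a point the paper leaves implicit, but it is the same underlying argument.
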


\subsection{Numerical Results for Approximation Formula}
\label{sec:numerical_results}

In Figures \ref{fig:approx_n}-\ref{fig:approx_xi}, we depict primarily the approximation formula from Section \ref{sec:gaussian_approximation}, when \eqref{eq:approx-mean-vector} and \eqref{eq:approx-covariance-structure} are used in \eqref{eq:asymp_approx}. This is compared against a Monte-Carlo estimate of the success probability (with $2 \times 10^{4}$ simulation replications used to generate each point), as well as the distribution-free bounds from Proposition \ref{prop:dist-free-bound}. In each figure, the nominal values $n = 100$, $m = 5$, $\alpha = 0.05$, $\rho = 0.6$ are used as a baseline, and one variable is varied at a time, keeping the others fixed. These figures validate the distribution-free bounds from Proposition \ref{prop:dist-free-bound}, but also show that they are not very useful in this example. The approximation formula is also exhibited to be reasonably close to the true probability, whilst still being conservative (in the sense of being an underestimate) of the success probability.

    \begin{figure}[hbt!]
       \centering
       \vskip\baselineskip
        \begin{subfigure}[t]{0.49\textwidth}
        \vskip 0pt
\includegraphics[width=\textwidth]{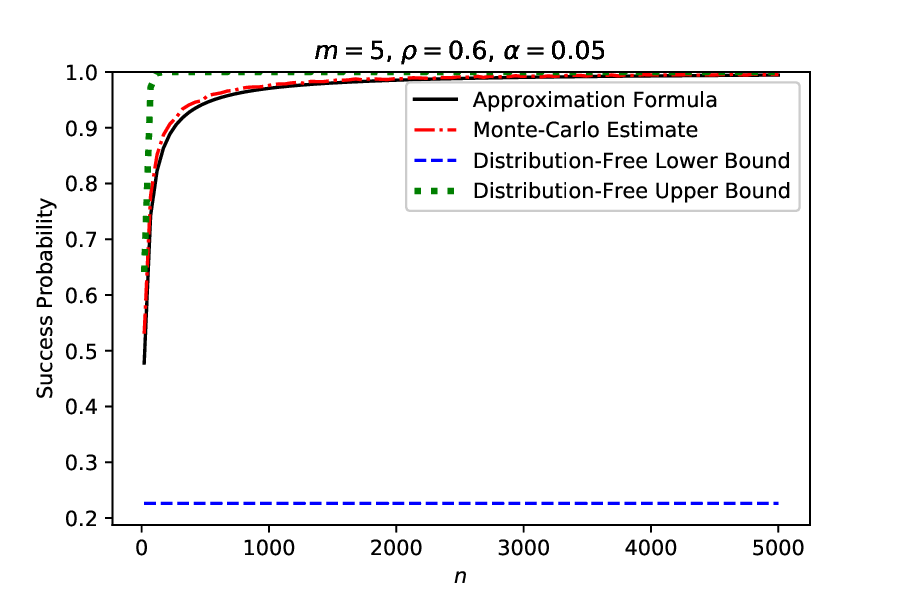}\centering
\caption{Comparison of $p_{\mathrm{success}}$ under the Gaussian copula model when $n$ is varied. In particular, this figure demonstrates monotonicity and convergence in $n$ from Theorem \ref{thm:approx-convergence}.}
\label{fig:approx_n}
        \end{subfigure}
        \hfill
        \begin{subfigure}[t]{0.49\textwidth}
        \vskip 0pt
\includegraphics[width=\textwidth]{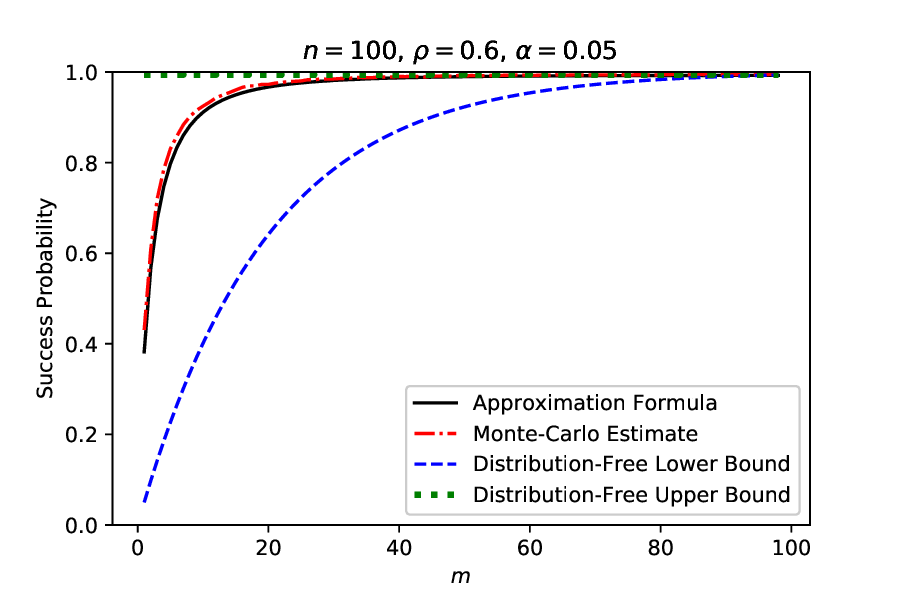}\centering
\caption{Comparison of $p_{\mathrm{success}}$ under the Gaussian copula model when $m$ is varied. In particular, this figure demonstrates: 1) monotonicity in $m$ from Lemma \ref{lem:monotonicity-m}, and 2) tightness of the bounds when $m = n$ from Remark \ref{rem:n-equal-m}.}
\label{fig:approx_m}
        \end{subfigure}
        \vskip\baselineskip
        \begin{subfigure}[t]{0.49\textwidth}   
\includegraphics[width=\textwidth]{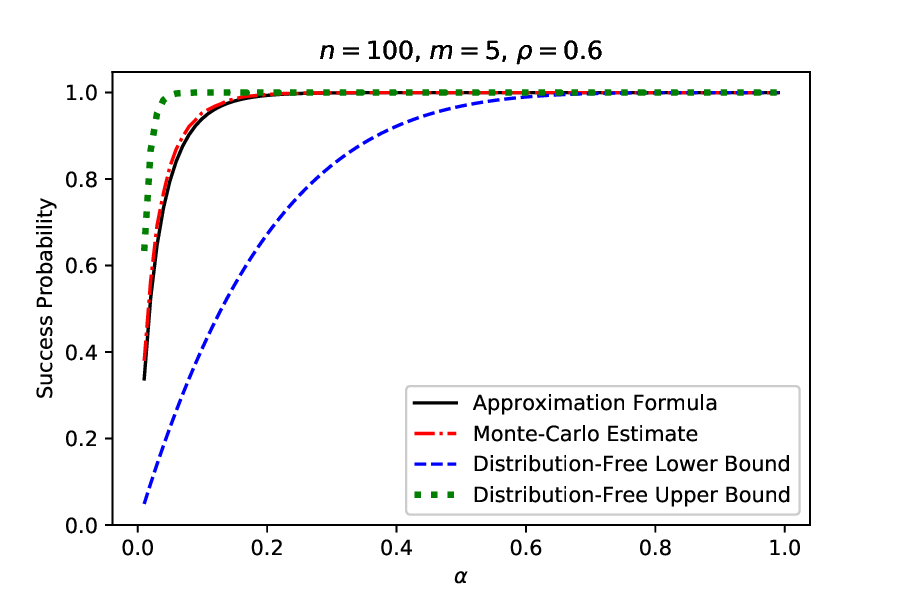}\centering
\caption{Comparison of $p_{\mathrm{success}}$ under the Gaussian copula model when $\alpha$ is varied. In particular, this figure demonstrates tightness of the bounds as $\alpha = 1$ and as $\alpha \to 0$ from Remark \ref{rem:n-equal-m}. The success probability appears to be increasing in $\alpha$, which is intuitive (by goal softening, we can improve our odds of success).}
\label{fig:approx_alpha}
        \end{subfigure}
        \hfill
        \begin{subfigure}[t]{0.49\textwidth}   
\includegraphics[width=\textwidth]{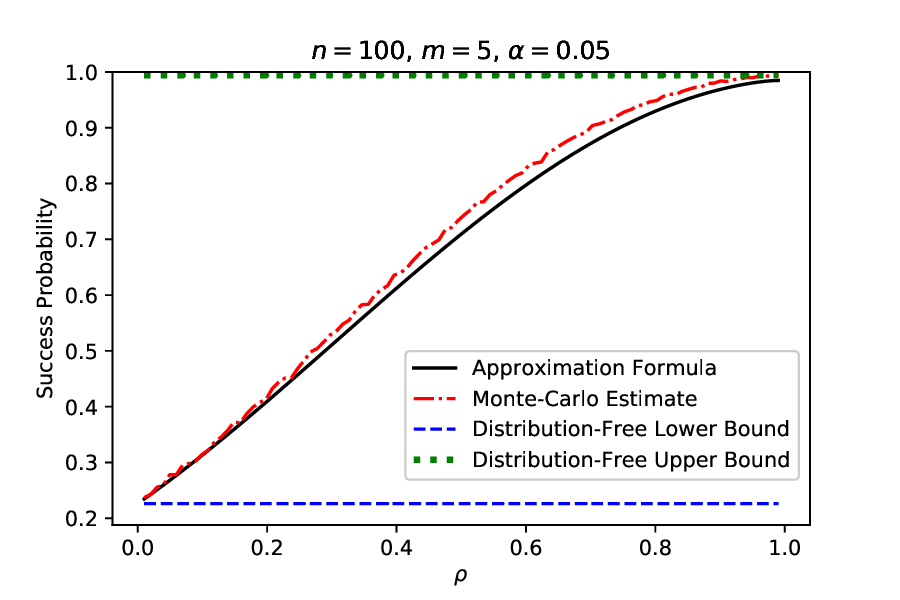}\centering
\caption{Comparison of $p_{\mathrm{success}}$ under the Gaussian copula model when $\rho$ is varied. The success probability appears to be increasing in $\rho$, or alternatively decreasing in $\xi$, which is intuitive (there is a higher price to be paid for more noise).} 
\label{fig:approx_xi}
        \end{subfigure}
       \caption{\small Numerical results for the approximation formula.} 
        \label{fig:numerical1}
    \end{figure}
    
\subsection{Numerical Results for Lower Bound}

Throughout Figures \ref{fig:approx_n2_v3}-\ref{fig:approx_n_v3}, we plot the optimised lower bound \eqref{eq:success-prob-copula-lb-optimised} from Theorem \ref{thm:constructed-lower-bound}, with $m = 1$, $\alpha = 0.05$, $\rho = 0.5$ fixed, and $n$ varied. In Figure \ref{fig:approx_n2_v3}, this is compared against a Monte-Carlo estimate of the success probability (with $2 \times 10^{4}$ replications used to generate each point), and also the approximation formula from Section \ref{sec:gaussian_approximation}, when \eqref{eq:approx-mean-vector} and \eqref{eq:approx-covariance-structure} are used in \eqref{eq:asymp_approx}. In Figure \ref{fig:approx_n_v3}, we instead plot the lower bound over a semi-log horizontal axis scale for $n$, to illustrate the convergence of the success probability to one. These plots demonstrate that the lower bound is quite close to the approximation formula, which itself is reasonably close to the actual probability. As the lower bound has been derived with $m = 1$ while the bound itself does not change with $m$, this means the bound is least conservative for $m = 1$, and will generally become more conservative as $m$ grows. For instance with $\alpha = 0.05$ and $m = 20$, the lower bound from Proposition \ref{prop:dist-free-bound} yields $p_{\mathrm{success}} \geq 0.64$, surpassing the lower bounds from Figure \ref{fig:approx_n2_v3}.

 \begin{figure}[hbt!]
       \centering
       \vskip\baselineskip
        \begin{subfigure}[t]{0.49\textwidth}
        \vskip 0pt
\includegraphics[width=\textwidth]{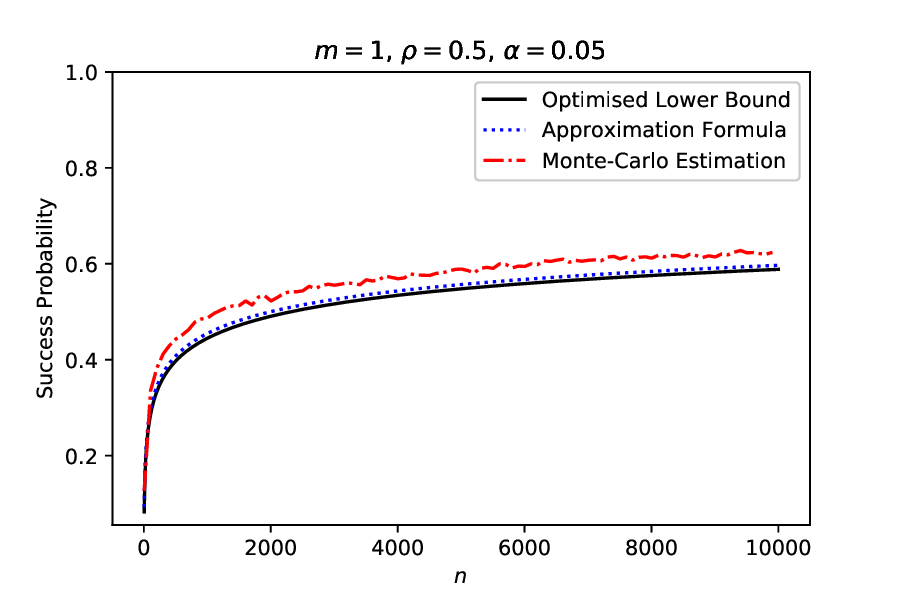}\centering
\caption{Comparison of the optimised lower bound in \eqref{eq:success-prob-copula-lb-optimised} of Theorem \ref{thm:constructed-lower-bound}, as $n$ is varied. In addition, this figure demonstrates monotonicity in $n$ from Theorem \ref{thm:approx-convergence}\ref{thm:approx-convergence-part-a}.}
\label{fig:approx_n2_v3}
        \end{subfigure}
        \hfill
        \begin{subfigure}[t]{0.49\textwidth}
        \vskip 0pt
\includegraphics[width=\textwidth]{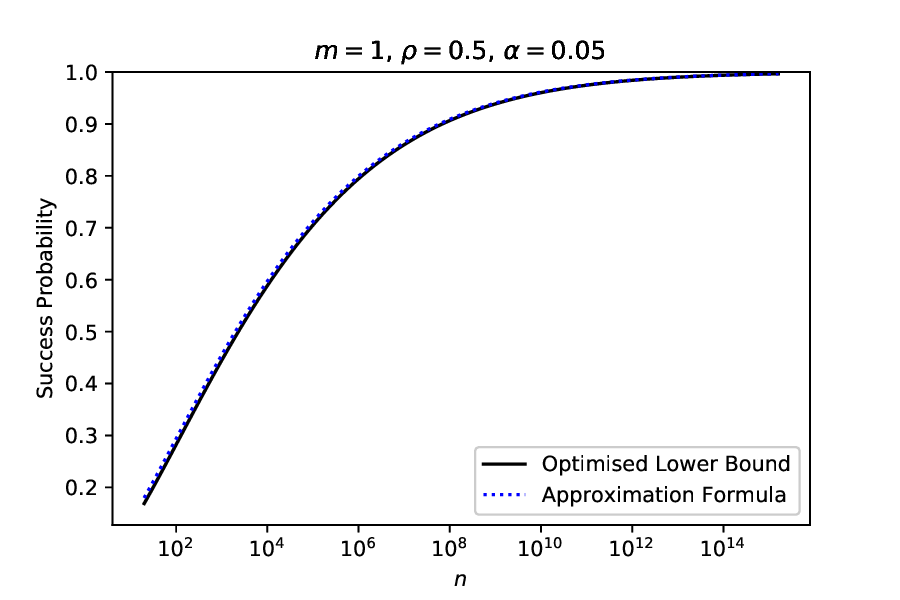}\centering
\caption{Comparison of the lower bound in Theorem \ref{thm:constructed-lower-bound} when $n$ is varied over a semi-log horizontal axis scale. In addition, this figure demonstrates convergence to one in $n$ from Theorem \ref{thm:approx-convergence}.}
\label{fig:approx_n_v3}
        \end{subfigure}
       \caption{\small Numerical results for the lower bound.} 
        \label{fig:numerical2}
    \end{figure}
    
\subsection{Computational Complexity Comparison}


Recall from Section \ref{sec:p-success} that evaluation of $p_{\mathrm{success}}$ via numerical quadrature integration is of time complexity $O\left(n\right)$. Estimation of $p_{\mathrm{success}}$ via Monte-Carlo simulation is unbiased (via the law of large numbers) and also of time complexity $O\left(n\right)$, for a fixed number of simulation replications. This is because each replication requires $O\left(n\right)$ time to find the lowest $m$ samples using the introselect algorithm \cite{Musser1997}, and subsequently $O\left(m\right)$ time (noting $m \leq n$) to determine if any of the lowest $m$ are smaller than $x_{\alpha}^{*}$. The corresponding standard deviation of the Monte-Carlo estimate is of $O\left(T^{-1/2}\right)$, with $T$ being the number of simulation replications. 

Recall from Section \ref{sec:gaussian_approximation} that the approximation formula is of time complexity $O\left(m^{2}\right)$. Thus, the approximation formula carries the advantage of being computationally more efficient for small $m$ and large $n$. Even for moderate $m$, the approximation method may still be the preferred approach due to the nested $O\left(1\right)$ computation via numerical integration, as well as the imprecision of the Monte-Carlo approach. Using the baseline values $n = 100$, $m = 5$, $\alpha = 0.05$, $\rho = 1/\sqrt{2}$ (equivalent to $\xi^{2} = 1$), we implemented the three different computation procedures in \textsc{Matlab} and compare the relative computation time. A Monte-Carlo simulation with $2\times 10^{8}$ replications yielded a $95\%$ confidence interval of $p_{\mathrm{success}} \in \left(0.90308, 0.90316\right)$. Using numerical quadrature integration, it took $0.9336$ of the time taken performing the Monte-Carlo simulation to compute $p_{\mathrm{success}} = 0.9031$. Using the approximation formula, it took $6\times 10^{-6}$ of the time taken for the Monte-Carlo simulation to compute $\widehat{p}_{\mathrm{success}} = 0.8765$.

In addition, implementation of the optimised lower bound \eqref{eq:success-prob-copula-lb-optimised} is of $O\left(1\right)$, since it is akin to the approximation formula with $m = 1$. The form of the lower bound from  \eqref{eq:success-prob-copula-lb} also admits an $O\left(1\right)$ approach to invert the lower bound for guaranteeing high probabilities of success; the advantage of this is exhibited in the following section.

\section{High Probability Guarantees of Success}

\label{sec:cases}

The purpose of this section is to numerically investigate the efficacy of Corollary \ref{coro:guarantee} in the large $n$ scenario, by considering the problem of guaranteeing a desired high probability.

\begin{problem}[High probability guarantees of success]
Given the triple $\left(m, \alpha, \rho\right) \in \mathbb{N} \times \left(0, 1\right] \times \left(0, 1\right]$, how large should $n$ be, such that
\begin{equation}
p_{\mathrm{success}}^{\mathrm{G}}\left(n, m, \alpha, \rho\right) \geq 1 - \delta
\end{equation}
for any given $\delta \in \left(0, 1\right]$?
\label{prob:high-prob}
\end{problem}

Having high probability guarantees would be useful in a situation where the sample size $n$ can be increased relatively cheaply (hence large $n$ would not be unreasonable), whereas increasing the selection size $m$ in order to increase $p_{\mathrm{success}}$ may be prohibitively expensive and/or non-negotiable. To address Problem \ref{prob:high-prob} (while avoiding the trivial solution of $n = \infty$), take the lower bound in \eqref{eq:success-prob-copula-lb} of Theorem \ref{thm:constructed-lower-bound}, which given $\alpha$, $\rho$ and $\delta$, we aim to invert for $n$ in terms of $\theta$ with the expression
\begin{equation}
\Phi\left(\dfrac{\Phi^{-1}\left(\alpha\right) - \rho\mu_{n}}{\sqrt{1 - \rho^{2} + \rho^{2}\sigma_{n}^{2}}}\right) = 1- \delta.
\end{equation}
Putting the definitions of $\mu_{n}$ and $\sigma_{n}$ from \eqref{eq:mu-constructed} and \eqref{eq:sigma-constructed} respectively, this equation can be rearranged into a quartic equation in $\sqrt{\log\left(nc_{1}\right)}$, of the form
\begin{equation}
a_{4}\log\left(nc_{1}\right)^{2} + a_{3}\log\left(nc_{1}\right)^{3/2} + a_{2}\log\left(nc_{1}\right) + a_{1}\log\left(nc_{1}\right)^{1/2} + a_{0} = 0,
\end{equation}
where
\begin{align}
a_{4} &= -\dfrac{2\rho^{2}}{\log\log 2}, \\
a_{3} &= -\dfrac{4\Phi^{-1}\left(\alpha\right)\rho\sqrt{c_{2}}}{\log\log2}, \\
a_{2} &= 2\rho^{2}-\dfrac{2c_{2}\left(\left[\Phi^{-1}\left(\alpha\right)\right]^{2}-\left[\Phi^{-1}\left(1-\delta\right)\right]^{2}+\rho^{2}\left[\Phi^{-1}\left(1-\delta\right)\right]^{2}\right)}{\log\log2}, \\
a_{1} &= 4\sqrt{c_{2}}\Phi^{-1}\left(\alpha\right)\rho, \\
a_{0} &= 2c_{2}\left(\left[\Phi^{-1}\left(\alpha\right)\right]^{2}-\left[\Phi^{-1}\left(1-\delta\right)\right]^{2}+\rho^{2}\left[\Phi^{-1}\left(1-\delta\right)\right]^{2}\right)-\rho^{2}\left[\Phi^{-1}\left(1-\delta\right)\right]^{2}.
\end{align}
Therefore we take the solution for $n$ corresponding to the greatest real root of the quartic equation. Let this solution for $n$ in terms of $\theta$ be denoted $\bar{n}\left(\theta\right)$. According to the monotonicity and convergence properties from Theorem \ref{thm:approx-convergence} manifested in Corollary \ref{coro:guarantee}, then provided $\bar{n}\left(\theta\right) \geq n^{*}\left(\theta\right)$, we guarantee
\begin{equation}
p_{\mathrm{success}}^{\mathrm{G}}\left(\bar{n}\left(\theta\right), m, \alpha, \rho\right) \geq 1 - \delta.
\end{equation}
since $\bar{n}\left(\theta\right)$ upper bounds the smallest $n$ needed such that $p_{\mathrm{success}} \geq 1 - \delta$. Moreover, one can numerically optimise with respect to $\theta$ to find the smallest $\bar{n}\left(\theta\right)$ that guarantees a high probability of success.

\begin{table}[hbt!]
\centering
\caption{Computed values of $n$ which guarantees $p_{\mathrm{success}}^{\mathrm{G}}\left(n, m, \alpha, \rho\right) \geq 1 - \delta$, with $\alpha = 0.01$ fixed and valid for any $m \geq 1$.}
\label{tab:high-prob}
\begin{tabular}{|l|c|c|c|}
\toprule
   & $\delta = 0 .01$        & $\delta = 0.05$         &  $\delta = 0.1$ \\ \midrule
$\rho = 0.01$ & $8.144\times 10^{47007}$       & $5.427\times 10^{34246}$ & $8.943 \times 10^{28267}$ \\ 
$\rho = 0.3$ & $3.289\times 10^{51}$       & $1.619\times 10^{38}$ & $8.775 \times 10^{31}$ \\ 
$\rho = 0.6$  & $8.703 \times 10^{11}$ & $1.988 \times 10^{9}$       & $1.078 \times 10^{8}$ \\ 
$\rho = 0.9$  & $16744$     & $4338$     & $2188$   \\ 
$\rho = 0.99$  & $893$     & $505$     & $372$   \\ \bottomrule
\end{tabular}
\end{table}

Table \ref{tab:high-prob} lists computed values of $\bar{n}$ numerically optimised with respect to $\theta$, taking into account the requirement $\bar{n}\left(\theta\right) \geq n^{*}\left(\theta\right)$, using the aforementioned approach. The table is valid for $m \geq 1$ (least conservative when $m = 1$), for $\alpha = 0.01$ and a variety of values for $\rho$ and $\delta$. The values for $n$ trend downwards as $\rho$ increases, which is intuitive (as fewer samples might be required if noisy observations are strongly correlated with the actual values). Of particular note, the case with extremely small correlation $\rho = 0.01$ (interpreted as a noise-to-signal ratio of $\xi^{2} \approx 10^{4}$, by the equivalence in Theorem \ref{thm:special_case}) requires $n$ to be at an impractical order of magnitude, namely $10^{47007}$ when $\delta = 0.01$. This highlights the utility of the lower bound in Theorem \ref{thm:constructed-lower-bound}, which allows for an $O\left(1\right)$ inversion to find a sufficiently high $n$. If Problem \ref{prob:high-prob} were attempted be solved with an $O\left(n\right)$ Monte-Carlo simulation or the $O\left(n\right)$ expression from Proposition \ref{prop:success-prob}, then large $n$ such as in the order of $10^{47007}$ would have rendered the evaluation of such probabilities to be intractable.

\section{Conclusion}
Supported by results applicable to an additive noise model, we have proposed various approaches for computing success probabilities under a general Gaussian copula model. Numerical experiments illustrate that our bounds and approximations reasonably follow the actual success probabilities. Furthermore, we used our lower bound to guarantee high probabilities of success, for situations which would have been computationally intractable by other means.

We pinpoint directions for further investigation. In practice, the correlation $\rho$ may not be known, but instead replaced by heuristic guesses. Also, a practitioner may prefer to not choose $\alpha$ directly, but rather let it be based on $x_{\alpha}^{*}$ corresponding to some concrete performance specification. Future work will focus on statistical uncertainty quantification techniques for obtaining reliable estimates of $\alpha$ and $\rho$ for use in computing success probabilities. Another research direction involves extending the stochastic dominance construction in Lemma \ref{lem:stoch-dom-constructed} to a multivariate version, or establishing that the approximation formula in Section \ref{sec:gaussian_approximation} satisfies the stochastic dominance condition in Lemma \ref{lem:lower-bound-uo}. Either may pave the way for improved lower bounds over Theorem \ref{thm:constructed-lower-bound}, for the case $m > 1$.

\bibliographystyle{ieeetr}
\bibliography{arxiv_template}  






\begin{appendices}

\section{Proofs of Section \ref{sec:prelim} Results}
\label{sec:prelim-proofs}

\begin{lemma}[Joint CDF of order statistics]
Denote ranks $1 \leq n_{1} < \dots < n_{k} \leq n$. Then the joint CDF  of the order statistics $X_{n_{1}:n}, \dots, X_{n_{k}:n}$ for continuous $X$ with parent CDF denoted $F\left(\cdot\right)$ is
\begin{multline}
F_{n_{1},\dots,n_{k}}\left(x_{1}, \dots, x_{k}\right) = \sum_{i_{k} = n_{k}}^{n}\sum_{i_{k - 1} = n_{k - 1}}^{i_{k}}\dots\sum_{i_{1} = n_{1}}^{i_{2}}\dfrac{n!}{i_{1}!\left(i_{2} - i_{1}\right)!\times\dots\times\left(n - i_{k}\right)!}\left[F\left(x_{1}\right)\right]^{i_{1}} \\
\times\left[F\left(x_{2}\right) - F\left(x_{1}\right)\right]^{i_{2} - i_{1}}\times\dots\times\left[1 - F\left(x_{k}\right)\right]^{n - i_{k}}
\label{eq:joint_CDF_order_statistics}
\end{multline}
for the case $x_{1} \leq \dots \leq x_{k}$. For the case we do not have $x_{1} \leq \dots \leq x_{k}$, then it holds that
\begin{equation}
F_{n_{1},\dots,n_{k}}\left(x_{1}, \dots, x_{k}\right) = F_{n_{1},\dots,n_{k}}\left(x_{1}^{*}, \dots, x_{k}^{*}\right)
\label{eq:joint_CDF_order_statistics2}
\end{equation}
where
\begin{align}
x_{k}^{*} &:= x_{k}, \\
x_{k - 1}^{*} &:= \min\left\{x_{k - 1}, x_{k}^{*}\right\}, \\
&\vdots \nonumber \\
x_{1}^{*} &:= \min\left\{x_{1}, x_{2}^{*}\right\}.
\end{align}
\label{lem:ord-stat-joint-CDF}
\end{lemma}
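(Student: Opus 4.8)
The plan is to prove the ordered case ($x_{1} \leq \dots \leq x_{k}$) directly by a multinomial counting argument, and then reduce the general (unordered) case to the ordered one using the almost-sure monotonicity of the order statistics. For the ordered case, I would first translate the defining event of the joint CDF into a statement about counts. Writing $F_{n_{1},\dots,n_{k}}\left(x_{1},\dots,x_{k}\right) = \operatorname{Pr}\left(\bigcap_{j=1}^{k}\{X_{n_{j}:n} \leq x_{j}\}\right)$, the key observation is that, because $X$ is continuous, the order-statistic event $\{X_{n_{j}:n} \leq x_{j}\}$ holds if and only if at least $n_{j}$ of the $n$ i.i.d.\ samples lie in $(-\infty, x_{j}]$. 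Letting $N_{j} := \#\{l : X_{l} \leq x_{j}\}$ be the number of samples at or below $x_{j}$, the event becomes $\bigcap_{j=1}^{k}\{N_{j} \geq n_{j}\}$.

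Next, since $x_{1} \leq \dots \leq x_{k}$, the thresholds partition $\mathbb{R}$ into the $k+1$ cells $(-\infty, x_{1}], (x_{1}, x_{2}], \dots, (x_{k-1}, x_{k}], (x_{k}, \infty)$. I would let $i_{1}$ count the samples in the first cell and $i_{j} - i_{j-1}$ the samples in the $j$-th, so that $i_{j} = N_{j}$ is the cumulative count and the cell counts are multinomially distributed with cell probabilities $F(x_{1}), F(x_{2}) - F(x_{1}), \dots, 1 - F(x_{k})$. Conditioning on the exact cumulative counts $(i_{1}, \dots, i_{k})$ and summing the multinomial probabilities over every configuration consistent with the event — namely $i_{j} \geq n_{j}$ for all $j$ together with the monotonicity $i_{1} \leq i_{2} \leq \dots \leq i_{k} \leq n$ forced by cumulative counting — reproduces exactly \eqref{eq:joint_CDF_order_statistics}. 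The nested summation limits in the statement encode precisely these two constraints: the lower limit $i_{j} = n_{j}$ enforces $N_{j} \geq n_{j}$, while the upper limit $i_{j} = i_{j+1}$ enforces the monotonicity, and $n_{j} < n_{j+1}$ guarantees each range is nonempty.

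For the case where $x_{1} \leq \dots \leq x_{k}$ fails, I would exploit that the order statistics are almost surely ordered, $X_{n_{1}:n} \leq \dots \leq X_{n_{k}:n}$. Unwinding the recursion defining the $x_{j}^{*}$ gives the closed form $x_{j}^{*} = \min\{x_{j}, x_{j+1}, \dots, x_{k}\}$, so that $(x_{1}^{*}, \dots, x_{k}^{*})$ is non-decreasing and the ordered-case formula applies to it. The inclusion $\bigcap_{j}\{X_{n_{j}:n} \leq x_{j}^{*}\} \subseteq \bigcap_{j}\{X_{n_{j}:n} \leq x_{j}\}$ is immediate from $x_{j}^{*} \leq x_{j}$. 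For the reverse inclusion, on the event $\bigcap_{j}\{X_{n_{j}:n} \leq x_{j}\}$ and using $X_{n_{j}:n} \leq X_{n_{l}:n}$ for $j \leq l$, I would deduce $X_{n_{j}:n} \leq \min_{l \geq j} x_{l} = x_{j}^{*}$ for every $j$. This yields equality of the two events, hence \eqref{eq:joint_CDF_order_statistics2}, and applying the ordered-case formula to $(x_{1}^{*}, \dots, x_{k}^{*})$ closes the argument.

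I expect the only genuinely delicate step to be the combinatorial bookkeeping in the ordered case: verifying that the exact-count multinomial probabilities, summed over precisely the index set carved out by the nested limits, recover the event $\bigcap_{j}\{N_{j} \geq n_{j}\}$ with no double-counting and no omission. The continuity of $X$ — which forces ties to occur with probability zero and makes the ``at least $n_{j}$ samples below $x_{j}$'' characterisation of $\{X_{n_{j}:n} \leq x_{j}\}$ exact — is what makes this bookkeeping go through cleanly.
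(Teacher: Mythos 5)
Your proposal is correct, and in substance it is the proof the paper intends, with the details actually filled in. For the ordered case the paper simply defers to the multinomial counting argument of David and Nagaraja (Section 2.2), which is exactly what you reconstruct: $\{X_{n_j:n} \leq x_j\}$ coincides with $\{N_j \geq n_j\}$ for the cumulative cell counts $N_j$, the cell counts over the partition induced by $x_1 \leq \dots \leq x_k$ are multinomial with cell probabilities $F(x_1), F(x_2)-F(x_1), \dots, 1-F(x_k)$, and the nested summation limits enumerate precisely the cumulative count vectors with $n_j \leq i_j \leq i_{j+1}$ and $i_k \leq n$, so summing the disjoint exact-count events recovers \eqref{eq:joint_CDF_order_statistics} with no double counting. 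For the unordered case your route differs mildly from the paper's: you establish equality of events, $\bigcap_j\{X_{n_j:n}\leq x_j\} = \bigcap_j\{X_{n_j:n}\leq x_j^*\}$, via the closed form $x_j^* = \min_{l \geq j} x_l$ and the pathwise ordering $X_{n_j:n} \leq X_{n_l:n}$ for $j \leq l$, whereas the paper argues that the joint density of the order statistics vanishes on the region between $(x_1^*,\dots,x_k^*)$ and $(x_1,\dots,x_k)$. Both rest on the same fact (the order statistics live on the ordered cone), but your event-level argument is the more elementary and self-contained of the two, since it never invokes existence of a joint density and passes from equality of events to equality of probabilities directly. One minor imprecision: the equivalence $\{X_{r:n}\leq t\} = \{\#\{l : X_l \leq t\}\geq r\}$ and the multinomial cell-count structure hold for an arbitrary parent distribution, so continuity of $X$ is not actually what makes the bookkeeping exact; this does not affect your argument, which goes through as written under the lemma's hypotheses.
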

\begin{proof}
The expression \eqref{eq:joint_CDF_order_statistics} generalises naturally based on arguments provided in \cite[Section 2.2]{David2005}. The result \eqref{eq:joint_CDF_order_statistics2} is obtained by noting that by construction $x_{1}^{*} \leq \dots \leq x_{k}^{*}$, and the joint density in the region bounded between $\left(x_{1}^{*}, \dots, x_{k}^{*}\right)$ and $\left(x_{1}, \dots, x_{k}\right)$ is zero.
\end{proof}

\subsection{Proof of Proposition \ref{prop:dist-free-bound}}
\label{sec:proof-dist-free-bound}

We prove the lower bound first. Let $G$ be the binomial random variable for the number of acceptable candidates of size $n$. Conditional on $G = g$ acceptable candidates, let $p_{\mathrm{success}|g}$ denote the conditional success probability. This can be interpreted as the the alignment probability (with alignment level one) from \cite[Eq. (2.19)]{Ho2007}, where it is shown that
\begin{equation}
p_{\mathrm{success}|g} \geq 1 - \binom{n - g}{m}\div \binom{n}{m}
\label{eq:conditional-lower-bound}
\end{equation}
The unconditional success probability may be obtained by using the law of total probability with the binomial distribution
\begin{align}
p_{\mathrm{success}} &\geq 1 - \sum_{g = 0}^{n}\binom{n}{g}\alpha^{g}\left(1 - \alpha\right)^{n - g}\binom{n - g}{m}\div \binom{n}{m} \\
&= 1 - \left(1 - \alpha\right)^{m}
\end{align}
after simplication. For the upper bound, we can write
\begin{equation}
p_{\mathrm{success}} = \operatorname{Pr}\left(\min\left\{X_{\left\langle 1\right\rangle}, \dots, X_{\left\langle m\right\rangle}\right\} \leq x_{\alpha}^{*}\right)
\end{equation}
and use the fact $\min_{i \in \left\{1, \dots, n\right\}}X_{i} \leq \min\left\{X_{\left\langle 1\right\rangle}, \dots, X_{\left\langle m\right\rangle}\right\}$ to bound
\begin{align}
p_{\mathrm{success}} &\leq \operatorname{Pr}\left(\min_{i \in \left\{1, \dots, n\right\}}\left\{X_{i}\right\} \leq x_{\alpha}^{*}\right) \\
&= 1 - \operatorname{Pr}\left(X_{1} > x_{\alpha}^{*}, \dots, X_{n} > x_{\alpha}^{*}\right) \\
&= 1 - \left(1 - \alpha\right)^{n}.
\end{align}

\subsection{Proof of Proposition \ref{prop:success-prob}}
\label{sec:proof-success-prob}

Let $G$ be the binomial random variable for the number of acceptable candidates of size $n$. We rely on the characterisation of the conditional failure probability given $G = g$, and denoted $p_{\mathrm{fail}|g}$, as being equal to the alignment probability in \cite[Eq. (2.19)]{Ho2007}. This says that
\begin{equation}
    p_{\mathrm{fail}|g} = \operatorname{Pr}\left(Z_{\left\{g + m\right\}} < Z_{\left\{1\right\}}\middle| G = g\right),
\end{equation}
where $Z_{\left\{g + m\right\}}$ and $Z_{\left\{1\right\}}$ are to be explained as follows. The random variable $Z_{\left\{g + m\right\}}$ is the $m$\textsuperscript{th} order statistic from a sample of size $n - g$, with parent distribution being the distribution of
\begin{equation}
    \overline{Z} = Y + \overline{X},
\end{equation}
where $\overline{X}$ is the left-truncated version of $X$, truncated at $x_{\alpha}^{*}$. The PDF of $\overline{X}$ is $f_{\overline{X}}\left(x\right)$ and given in \eqref{eq:left-truncated-pdf}. The PDF of $\overline{Z}$ can be obtained via convolution \cite[Section 4.8]{Grimmett2001}; its CDF is $F_{\overline{Z}}\left(z\right)$ and given in \eqref{eq:sum-dist-upper-cdf}. Then the CDF of $Z_{\left\{g + m\right\}}$ is $F_{Z_{\left\{g+m\right\}}}\left(z\right)$ and given in \eqref{eq:upper-conditional-cdf}, from which the expression can be obtained as a special case of Lemma \ref{lem:ord-stat-joint-CDF}. Similarly, $Z_{\left\{1\right\}}$ is the first order statistic of a sample of size $g$, with parent distribution being the distribution of
\begin{equation}
    \underline{Z} = Y + \underline{X},
\end{equation}
where $\underline{X}$ is the right-truncated version of $X$, truncated at $x_{\alpha}^{*}$. The PDF of $\underline{X}$ is $f_{\underline{X}}\left(x\right)$ and given in \eqref{eq:right-truncated-pdf}. The PDF of $\underline{Z}$ is $f_{\underline{Z}}\left(z\right)$ and can be obtained via convolution, given in \eqref{eq:sum-dist-lower-pdf}. The CDF of $\underline{Z}$ is $F_{\underline{Z}}\left(z\right)$ and given in \eqref{eq:sum-dist-lower-cdf}. Then the PDF of $Z_{\left\{1\right\}}$ is given by \eqref{eq:lower-conditional-pdf}, which can be obtained by differentiating a special case of the CDF in Lemma \ref{lem:ord-stat-joint-CDF}, or by directly using well-known expressions for densities of order statistics \cite[Eq. (2.1.6)]{David2005}. Then from the characterisation of the conditional failure probability $p_{\mathrm{fail}|g}$, along with the law of total probability, we get
\begin{align}
p_{\mathrm{success}} &= 1 - \sum_{g = 0}^{n}\binom{n}{g}\alpha^{g}\left(1 - \alpha\right)^{n - g}p_{\mathrm{fail}|g} \\
&= 1 - \left(1 - \alpha\right)^{n} - \sum_{g = 1}^{n - m}\binom{n}{g}\alpha^{g}\left(1 - \alpha\right)^{n - g}\operatorname{Pr}\left(Z_{\left\{g + m\right\}} < Z_{\left\{1\right\}}\middle| G = g\right) \label{eq:law_total_prob_conditional_failure}
\end{align}
because $p_{\mathrm{fail}|0} = 1$ and $p_{\mathrm{fail}|g} = 0$ for $g > n - m$. Then \eqref{eq:p-success-expresion} follows from \eqref{eq:law_total_prob_conditional_failure} because $Z_{\left\{g + m\right\}}$ and $Z_{\left\{1\right\}}$ are conditionally independent given $G$.

\section{Proofs of Section \ref{sec:main} Results}
\label{sec:main-proofs}

\begin{lemma}[Stochastic dominance of parametrised random vectors]
\label{lem:stoch_dom_param_rvects}
For random vectors $\mathbf{X}$, $\boldsymbol{\Theta}$, consider the conditional distribution $\left[\mathbf{X}\middle|\boldsymbol{\Theta}\right]$. Suppose that $\left[\mathbf{X}\middle|\boldsymbol{\Theta} = \boldsymbol{\theta}_{1}\right] \underset{\mathrm{st}}{\preceq} \left[\mathbf{X}\middle|\boldsymbol{\Theta} = \boldsymbol{\theta}_{2}\right]$ whenever $\boldsymbol{\theta}_{1} \leq \boldsymbol{\theta}_{2}$. Let $\mathbf{X}_{1}$ denote the variable for $\mathbf{X}$ that arises from chaining the random vector $\boldsymbol{\Theta}_{1}$ with $\left[\mathbf{X}\middle|\boldsymbol{\Theta}_{1}\right]$, and similarly let $\mathbf{X}_{2}$ denote the variable for $\mathbf{X}$ that arises from chaining the random vector $\boldsymbol{\Theta}_{2}$ with $\left[\mathbf{X}\middle|\boldsymbol{\Theta}_{2}\right]$. If $\boldsymbol{\Theta}_{1} \underset{\mathrm{st}}{\preceq} \boldsymbol{\Theta}_{2}$, then
\begin{equation}
\mathbf{X}_{1} \underset{\mathrm{st}}{\preceq} \mathbf{X}_{2}.
\end{equation}
\end{lemma}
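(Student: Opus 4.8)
The plan is to work directly from the increasing-function characterisation of the multivariate stochastic order in Definition \ref{def:stoch-dom}: to establish $\mathbf{X}_{1} \underset{\mathrm{st}}{\preceq} \mathbf{X}_{2}$ it suffices to show $\mathbb{E}\left[u\left(\mathbf{X}_{1}\right)\right] \leq \mathbb{E}\left[u\left(\mathbf{X}_{2}\right)\right]$ for every weakly increasing $u : \mathbb{R}^{n} \to \mathbb{R}$ (and it is enough to verify this for bounded increasing $u$, which guarantees that all the expectations below are finite). The central object is the conditional-expectation function $h\left(\boldsymbol{\theta}\right) := \mathbb{E}\left[u\left(\mathbf{X}\right) \mid \boldsymbol{\Theta} = \boldsymbol{\theta}\right]$, which is the \emph{same} kernel-averaging map for both $\mathbf{X}_{1}$ and $\mathbf{X}_{2}$, since the conditional law $\left[\mathbf{X} \mid \boldsymbol{\Theta} = \boldsymbol{\theta}\right]$ used in the chaining does not depend on whether $\boldsymbol{\theta}$ was produced by $\boldsymbol{\Theta}_{1}$ or $\boldsymbol{\Theta}_{2}$.

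First I would show that $h$ is weakly increasing. Fix $\boldsymbol{\theta}_{1} \leq \boldsymbol{\theta}_{2}$. By hypothesis $\left[\mathbf{X} \mid \boldsymbol{\Theta} = \boldsymbol{\theta}_{1}\right] \underset{\mathrm{st}}{\preceq} \left[\mathbf{X} \mid \boldsymbol{\Theta} = \boldsymbol{\theta}_{2}\right]$, so applying Definition \ref{def:stoch-dom} to the fixed increasing function $u$ gives $h\left(\boldsymbol{\theta}_{1}\right) \leq h\left(\boldsymbol{\theta}_{2}\right)$; as this holds for every comparable pair, $h$ is non-decreasing. Next I would express the two target expectations via the tower property: because $\mathbf{X}_{i}$ is obtained by chaining $\boldsymbol{\Theta}_{i}$ with the common conditional kernel, the law of total expectation yields $\mathbb{E}\left[u\left(\mathbf{X}_{i}\right)\right] = \mathbb{E}\left[\mathbb{E}\left[u\left(\mathbf{X}\right) \mid \boldsymbol{\Theta}_{i}\right]\right] = \mathbb{E}\left[h\left(\boldsymbol{\Theta}_{i}\right)\right]$ for $i \in \left\{1, 2\right\}$.

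Finally I would invoke the second hypothesis $\boldsymbol{\Theta}_{1} \underset{\mathrm{st}}{\preceq} \boldsymbol{\Theta}_{2}$: applying Definition \ref{def:stoch-dom} to the increasing function $h$ established above gives $\mathbb{E}\left[h\left(\boldsymbol{\Theta}_{1}\right)\right] \leq \mathbb{E}\left[h\left(\boldsymbol{\Theta}_{2}\right)\right]$, hence $\mathbb{E}\left[u\left(\mathbf{X}_{1}\right)\right] \leq \mathbb{E}\left[u\left(\mathbf{X}_{2}\right)\right]$. Since $u$ was an arbitrary (bounded) increasing function, the conclusion $\mathbf{X}_{1} \underset{\mathrm{st}}{\preceq} \mathbf{X}_{2}$ follows.

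The two appeals to the definition of the stochastic order are routine; the point requiring care is the measurability and well-definedness of $h$ as a \emph{bona fide} increasing function of $\boldsymbol{\theta}$, so that it is an admissible test function witnessing $\boldsymbol{\Theta}_{1} \underset{\mathrm{st}}{\preceq} \boldsymbol{\Theta}_{2}$, together with the justification of the tower property for the chained construction. Restricting to bounded increasing $u$ sidesteps integrability concerns, while positing a regular conditional distribution for $\left[\mathbf{X} \mid \boldsymbol{\Theta}\right]$ makes $h$ measurable; crucially, its monotonicity is then forced purely by the ordering hypothesis and needs no continuity assumption.
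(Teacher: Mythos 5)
Your proof is correct and takes essentially the same approach as the paper: the paper conditions on $\boldsymbol{\Theta}_{i}$ via the tower property, observes that $\operatorname{Pr}\left(\mathbf{X} \in \mathbb{U}\middle|\boldsymbol{\theta}\right)$ is weakly increasing in $\boldsymbol{\theta}$, and then invokes $\boldsymbol{\Theta}_{1} \underset{\mathrm{st}}{\preceq} \boldsymbol{\Theta}_{2}$ — which is exactly your argument specialised to $u = \mathbb{I}_{\left\{\cdot \in \mathbb{U}\right\}}$, the indicator of an upper set, rather than a general bounded increasing $u$.
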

\begin{proof}
For all upper sets $\mathbb{U}$, we may write using indicator variables (denoted by $\mathbb{I}$)
\begin{align}
\operatorname{Pr}\left(\mathbf{X}_{1} \in \mathbb{U}\right) &= \mathbb{E}\left[\mathbb{I}_{\left\{\mathbf{X}_{1} \in \mathbb{U}\right\}}\right] \\
&= \mathbb{E}_{\boldsymbol{\Theta}_{1}}\left[\mathbb{E}\left[\mathbb{I}_{\left\{\mathbf{X} \in \mathbb{U}\right\}}\middle|\boldsymbol{\Theta}_{1}\right]\right] \\
&= \mathbb{E}_{\boldsymbol{\Theta}_{1}}\left[\operatorname{Pr}\left(\mathbf{X} \in \mathbb{U}\middle|\boldsymbol{\Theta}_{1}\right)\right] \\
&\leq \mathbb{E}_{\boldsymbol{\Theta}_{2}}\left[\operatorname{Pr}\left(\mathbf{X} \in \mathbb{U}\middle|\boldsymbol{\Theta}_{2}\right)\right] \\
&= \operatorname{Pr}\left(\mathbf{X}_{2} \in \mathbb{U}\right)
\end{align}
where the inequality follows by definition of stochastic dominance, since $\operatorname{Pr}\left(\mathbf{X} \in \mathbb{U}\middle|\boldsymbol{\theta}\right)$ is a weakly increasing function of $\boldsymbol{\theta}$ for all upper sets $\mathbb{U}$.
\end{proof}

\begin{lemma}
Let $\mathbf{Z}_{\left[m\right]:n} := \left(Z_{1:n}, \dots, Z_{m:n}\right)$ denote the joint first $m$ order statistics of an i.i.d. sample of size $n$ from parent distribution $Z$. Then
\begin{equation}
\mathbf{Z}_{\left[m\right]:\left(n + 1\right)} \underset{\mathrm{st}}{\preceq} \mathbf{Z}_{\left[m\right]:n}.
\label{eq:stoch-dom-nplus1}
\end{equation}
\label{lem:ord-stat-stoch-dom-increasing-n}
\end{lemma}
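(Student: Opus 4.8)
The plan is to establish \eqref{eq:stoch-dom-nplus1} by a coupling argument, which converts the distributional claim into an almost-sure, coordinatewise inequality and thereby sidesteps the unwieldy joint CDFs of Lemma \ref{lem:ord-stat-joint-CDF}. First I would realise both random vectors on a common probability space: let $Z_{1}, \dots, Z_{n+1}$ be i.i.d.\ copies of $Z$, build $\mathbf{Z}_{[m]:n}$ as the first $m$ order statistics of $Z_{1}, \dots, Z_{n}$, and build $\mathbf{Z}_{[m]:(n+1)}$ as the first $m$ order statistics of the enlarged sample $Z_{1}, \dots, Z_{n+1}$. By construction each vector carries exactly the marginal law asserted in the statement, so this is a legitimate coupling.

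Next I would prove the key deterministic fact that, for any fixed realisation and every $k \in \{1, \dots, m\}$, one has $Z_{k:(n+1)} \leq Z_{k:n}$. The argument is a simple subset observation: the $k$ smallest values among $Z_{1}, \dots, Z_{n}$ are all still present among $Z_{1}, \dots, Z_{n+1}$, so at least $k$ of the $n+1$ values do not exceed $Z_{k:n}$; hence the $k$-th smallest of the enlarged sample satisfies $Z_{k:(n+1)} \leq Z_{k:n}$. Stacking these coordinatewise inequalities over $k = 1, \dots, m$ gives $\mathbf{Z}_{[m]:(n+1)} \leq \mathbf{Z}_{[m]:n}$ almost surely under the coupling.

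Finally I would invoke the elementary direction linking almost-sure domination to stochastic dominance: whenever two random vectors admit a coupling with $\mathbf{A} \leq \mathbf{B}$ almost surely, then for every weakly increasing $u: \mathbb{R}^{m} \to \mathbb{R}$ we have $u(\mathbf{A}) \leq u(\mathbf{B})$ pointwise, and taking expectations yields $\mathbb{E}\left[u(\mathbf{A})\right] \leq \mathbb{E}\left[u(\mathbf{B})\right]$, which is precisely $\mathbf{A} \underset{\mathrm{st}}{\preceq} \mathbf{B}$ in the sense of Definition \ref{def:stoch-dom}. Applying this with $\mathbf{A} = \mathbf{Z}_{[m]:(n+1)}$ and $\mathbf{B} = \mathbf{Z}_{[m]:n}$ delivers \eqref{eq:stoch-dom-nplus1}.

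I expect no serious obstacle in this route; the only points requiring a line of care are confirming that the coupling reproduces both prescribed marginals and that the subset inequality $Z_{k:(n+1)} \leq Z_{k:n}$ is unambiguous, which holds cleanly because $Z$ is continuous and so ties occur with probability zero. The more computational alternative—comparing the joint CDFs directly through Lemma \ref{lem:ord-stat-joint-CDF}—is considerably messier, so I would favour the coupling throughout.
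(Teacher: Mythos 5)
Your proposal is correct and is essentially the paper's own argument: both construct a coupling on a common probability space (the paper forms the size-$(n+1)$ sample and deletes one element uniformly at random, you simply take the first $n$ of the $n+1$ i.i.d.\ draws), observe the pointwise coordinatewise inequality between the two vectors of order statistics, and conclude stochastic dominance — the paper by citing the characterisation in \cite[Theorem 6.B.1]{Shaked2007}, you by proving that elementary direction directly via increasing test functions. These differences are immaterial; the key idea is identical.
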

\begin{proof}
Consider the following construction on the same probability space. Form an i.i.d. sample of size $n + 1$ from $Z$ and take the first $m$ order statistics. This will be equal in law to $\mathbf{Z}_{\left[m\right]:\left(n + 1\right)}$. Now delete one element uniformly at random, and re-compute the first $m$ order statistics. This will be equal in law to $\mathbf{Z}_{\left[m\right]:n}$. Moreover, for every realisation (denoted in lowercase) from this probability space, we have
\begin{equation}
\left(z_{1:\left(n + 1\right)}, \dots, z_{m:\left(n + 1\right)}\right) \leq \left(z_{1:n}, \dots, z_{m:n}\right).
\end{equation}
Therefore from the characterisation of stochastic dominance in \cite[Theorem 6.B.1]{Shaked2007}, \eqref{eq:stoch-dom-nplus1} holds.
\end{proof}

\begin{lemma}
For all $p \in \left[0, \frac{1}{2}\right]$.
\begin{equation}
-p\log 4 \leq \log\left(1 - p\right) \leq -p.
\end{equation}
\label{lem:log-concave-bounds}
\end{lemma}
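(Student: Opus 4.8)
The plan is to handle the two inequalities separately, since each admits an elementary self-contained argument.

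For the upper bound $\log(1-p) \leq -p$, I would invoke the standard tangent-line inequality for the (concave) logarithm, namely $\log t \leq t - 1$ for all $t > 0$. This follows from concavity of $\log$ together with the observation that $y = t - 1$ is precisely the tangent line to $\log t$ at $t = 1$, so the concave curve lies below its tangent. Substituting $t = 1 - p$ then gives $\log(1-p) \leq (1-p) - 1 = -p$ immediately, valid for all $p < 1$ and in particular on $\left[0, \frac{1}{2}\right]$.

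For the lower bound $\log(1-p) \geq -p\log 4$, I would introduce the auxiliary function $f(p) := \log(1-p) + p\log 4$ and show $f(p) \geq 0$ on $\left[0, \frac{1}{2}\right]$. First I would evaluate the endpoints: $f(0) = 0$, and using $\log 4 = 2\log 2$ one obtains $f\!\left(\frac{1}{2}\right) = -\log 2 + \frac{1}{2}\log 4 = 0$. Next I would differentiate twice, finding $f''(p) = -1/(1-p)^2 < 0$, so $f$ is strictly concave on $\left[0, \frac{1}{2}\right]$. A concave function that vanishes at both endpoints of a closed interval is non-negative throughout that interval (since for $p = (1-\lambda)\cdot 0 + \lambda\cdot\frac{1}{2}$ concavity gives $f(p) \geq (1-\lambda)f(0) + \lambda f\!\left(\frac{1}{2}\right) = 0$). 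Hence $f(p) \geq 0$, which rearranges to the desired lower bound.

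The argument is entirely routine and I do not anticipate a serious obstacle. The only point demanding minor care is the endpoint computation $f\!\left(\frac{1}{2}\right) = 0$, which hinges on the exact identity $\log 4 = 2\log 2$ so that the bound becomes tight at both ends of the interval. This is precisely what fixes the constant $\log 4$ in the statement and shows the coefficient cannot be lowered on $\left[0, \frac{1}{2}\right]$.
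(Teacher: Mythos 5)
Your proposal is correct and uses essentially the same argument as the paper: both bounds come from concavity of the logarithm, with the upper bound being the tangent line at $p=0$ and the lower bound being the chord from $(0,0)$ to $\left(\frac{1}{2},-\log 2\right)$ (your auxiliary-function computation with $f(0)=f\!\left(\frac{1}{2}\right)=0$ and $f''<0$ is just an explicit verification that this secant lies below the graph). The paper states this in one line; your version fills in the same details.
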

\begin{proof}
The lower bound can be established over $p \in \left[0, \frac{1}{2}\right]$ via concavity of $\log\left(1 - p\right)$, i.e. line secants lie below the graph. The upper bound can also be established over $p \geq 0$ via concavity.
\end{proof}

\begin{lemma}
For the Gaussian $Q$-function given by $Q\left(x\right) = 1 - \Phi\left(x\right)$
\begin{equation}
c_{1}\exp\left(-c_{2}x^{2}\right) \leq Q\left(x\right) \leq \frac{1}{2}\exp\left(-\dfrac{x^{2}}{2}\right)
\end{equation}
over $x \geq 0$, with
\begin{gather}
c_{1} = \dfrac{1}{2} - \dfrac{\theta}{\pi} \\
c_{2} = \dfrac{\cot{\theta}}{\pi - 2\theta}
\end{gather}
for any $\theta \in \left(0, \frac{\pi}{2}\right)$.
\label{lem:q-function-bounds}
\end{lemma}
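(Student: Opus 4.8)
The plan is to base both bounds on Craig's trigonometric representation of the Gaussian $Q$-function, namely
\[
Q\left(x\right) = \dfrac{1}{\pi}\int_{0}^{\pi/2}\exp\left(-\dfrac{x^{2}}{2\sin^{2}\phi}\right)d\phi, \qquad x \geq 0,
\]
which converts both inequalities into elementary estimates of a single integral. For the upper bound, I would note that $\sin^{2}\phi \leq 1$ on $\left[0, \frac{\pi}{2}\right]$, so the integrand is dominated uniformly in $\phi$ by $\exp\left(-x^{2}/2\right)$. Pulling this constant out and integrating over an interval of length $\pi/2$ yields $Q\left(x\right) \leq \frac{1}{\pi}\cdot\frac{\pi}{2}\exp\left(-x^{2}/2\right) = \frac{1}{2}\exp\left(-x^{2}/2\right)$, exactly the claimed upper bound.

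For the lower bound, the idea is to discard the sub-interval $\left[0, \theta\right)$, on which the exponent blows up, and then convexify the remainder. Since the integrand is nonnegative, restricting integration to $\left[\theta, \frac{\pi}{2}\right]$ can only decrease the value, giving $Q\left(x\right) \geq \frac{1}{\pi}\int_{\theta}^{\pi/2}\exp\left(-x^{2}/\left(2\sin^{2}\phi\right)\right)d\phi$. Writing this as $\frac{\pi/2 - \theta}{\pi}$ times the \emph{average} of the integrand over $\left[\theta, \frac{\pi}{2}\right]$ and invoking Jensen's inequality (the exponential being convex) moves the averaging inside the exponent:
\[
Q\left(x\right) \geq \dfrac{\pi/2 - \theta}{\pi}\exp\left(-\dfrac{x^{2}}{2\left(\pi/2 - \theta\right)}\int_{\theta}^{\pi/2}\dfrac{d\phi}{\sin^{2}\phi}\right).
\]
The prefactor is precisely $c_{1} = \frac{1}{2} - \frac{\theta}{\pi}$, and the remaining computation $\int_{\theta}^{\pi/2}\csc^{2}\phi\,d\phi = \left[-\cot\phi\right]_{\theta}^{\pi/2} = \cot\theta$ collapses the exponent to $-\frac{x^{2}\cot\theta}{2\left(\pi/2 - \theta\right)} = -\frac{x^{2}\cot\theta}{\pi - 2\theta} = -c_{2}x^{2}$, recovering the stated lower bound.

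The two genuinely load-bearing steps are the appeal to Craig's formula (which I would either cite or verify via the standard polar-coordinate argument) and the choice to apply Jensen's inequality rather than the cruder pointwise bound $\sin\phi \geq \sin\theta$. The latter would only deliver the weaker exponent $c_{2} = 1/\left(2\sin^{2}\theta\right)$; it is the convexity step that produces the sharper, specifically $\cot$-shaped constant $c_{2} = \cot\theta/\left(\pi - 2\theta\right)$ advertised in the statement. I expect the only point needing a line of care is confirming that everything is well defined and correctly oriented for every $\theta \in \left(0, \frac{\pi}{2}\right)$ — in particular that $\pi/2 - \theta > 0$, so the prefactor is a genuine positive weight and the Jensen averaging over $\left[\theta, \frac{\pi}{2}\right]$ is legitimate.
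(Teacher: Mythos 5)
Your proof is correct, but it takes a genuinely different route from the paper, which does not derive these bounds at all: the paper's proof is a two-line citation (the lower bound to Equation (2) of Wu et al., the upper bound to Equation (5) of Chiani et al.). Your argument reconstructs both inequalities from a single source, Craig's representation $Q\left(x\right) = \frac{1}{\pi}\int_{0}^{\pi/2}\exp\left(-x^{2}/\left(2\sin^{2}\phi\right)\right)d\phi$ for $x \geq 0$: the upper bound via the pointwise estimate $\sin^{2}\phi \leq 1$, and the lower bound by truncating the integral to $\left[\theta, \pi/2\right]$ and applying Jensen's inequality to pull the average inside the exponential. All the computations check out: the prefactor $\left(\pi/2 - \theta\right)/\pi = c_{1}$, the integral $\int_{\theta}^{\pi/2}\csc^{2}\phi\, d\phi = \cot\theta$, and the exponent $-x^{2}\cot\theta/\left(\pi - 2\theta\right) = -c_{2}x^{2}$; your side remark is also right, since the cruder bound $\sin\phi \geq \sin\theta$ would yield decay constant $1/\left(2\sin^{2}\theta\right) \geq \cot\theta/\left(\pi - 2\theta\right)$ (equivalent to $\sin 2\theta \leq \pi - 2\theta$), hence a strictly weaker lower bound for the purpose at hand. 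What your route buys is self-containedness and a unified view that both inequalities flow from one identity — indeed it essentially reproduces the arguments used inside the cited references themselves — while the paper's citation route buys brevity at the cost of sending the reader to two external sources.
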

\begin{proof}
The lower bound is due to \cite[Eq. (2)]{Wu2018} and the upper bound is found in \cite[Eq. (5)]{Chiani2003}.
\end{proof}

\subsection{Proof of Lemma \ref{lem:stoch-dom-constructed}}
\label{sec:proof-stoch-dom-constructed}

If $\widehat{Z}_{1:n}$ stochastically dominates $Z_{1:n}$, then $\operatorname{Pr}\left(\widehat{Z}_{1:n} \geq z\right) \geq \operatorname{Pr}\left(Z_{1:n} \geq z\right)$ for all $z \in \mathbb{R}$. Or in terms of the Gaussian $Q$-function $Q\left(z\right) = 1 - \Phi\left(z\right)$, we require
\begin{equation}
Q\left(z\right)^{n} \leq Q\left(\dfrac{z - \mu_{n}}{\sigma_{n}}\right)
\label{eq:q-function-stoch-dom}
\end{equation}
for all $z \in \mathbb{R}$, where the left-hand side can be derived with a special case of Lemma \ref{lem:ord-stat-joint-CDF}. The idea is to show that this bound holds over three different intervals whose union is $\mathbb{R}$, being $\left(-\infty, \mu_{n}\right]$, $\left[\mu_{n}, 0\right]$ and $\left[0, \infty\right)$. We begin with $z \in \left(-\infty, \mu_{n}\right]$. Since $\mu_{n} \leq 0$, then via the lower bound in Lemma \ref{lem:q-function-bounds}
\begin{equation}
Q\left(z\right)^{n} \leq \left(1 - c_{1}\exp\left(-c_{2}z^{2}\right)\right)^{n}.
\end{equation}
Since $0 \leq c_{1}\exp\left(-c_{2}z^{2}\right) \leq 1/2$, then putting $p = c_{1}\exp\left(-c_{2}z^{2}\right)$ in the upper bound from Lemma \ref{lem:log-concave-bounds}, we get
\begin{align}
Q\left(z\right)^{n} &\leq \exp\left(-nc_{1}\exp\left(-c_{2}z^{2}\right)\right) \\
&= \exp\left(-\exp\left(-\left(c_{2}z^{2}-\log\left(nc_{1}\right)\right)\right)\right).
\end{align}
Now using the upper bound in Lemma \ref{lem:q-function-bounds}, we have for $z \leq \mu_{n}$:
\begin{equation}
1-\dfrac{1}{2}\exp\left(-\dfrac{\left(z-\mu_{n}\right)^{2}}{2\sigma_{n}^{2}}\right)\leq Q\left(\dfrac{z-\mu_{n}}{\sigma_{n}}\right).
\end{equation}
The lower bound in Lemma \ref{lem:log-concave-bounds} implies $\exp\left(-p\log 4\right) \leq 1 - p$. Applying this with $p = \frac{1}{2}\exp\left(-\frac{\left(z-\mu_{n}\right)^{2}}{2\sigma_{n}^{2}}\right)$ and after some manipulation, we arrive at
\begin{equation}
Q\left(\dfrac{z-\mu_{n}}{\sigma_{n}}\right) \geq \exp\left(-\exp\left(-\left(\dfrac{\left(z-\mu_{n}\right)^{2}}{2\sigma_{n}^{2}}-\log\log2\right)\right)\right).
\end{equation}
Thus a sufficient condition for $Q\left(z\right)^{n} \leq Q\left(\frac{z - \mu_{n}}{\sigma_{n}}\right)$ over $z \in \left(-\infty, \mu_{n}\right]$ is
\begin{equation}
\exp\left(-\exp\left(-\left(c_{2}z^{2}-\log\left(nc_{1}\right)\right)\right)\right) \leq \exp\left(-\exp\left(-\left(\dfrac{\left(z-\mu_{n}\right)^{2}}{2\sigma_{n}^{2}}-\log\log2\right)\right)\right)
\end{equation}
or equivalently,
\begin{equation}
\left(\dfrac{1}{\sigma_{n}^{2}}-2c_{2}\right)z^{2}-2\dfrac{\mu_{n}}{\sigma_{n}^{2}}z+\dfrac{\mu_{n}^{2}}{\sigma_{n}^{2}}+2\log\left(nc_{1}\right)-2\log\log2\geq0.
\end{equation}
The roots of this quadratic are at
\begin{equation}
z = \dfrac{\mu_{n}\pm\sqrt{\mu_{n}^{2}-\left(1-2c_{2}\sigma_{n}^{2}\right)\left(\mu_{n}^{2}+2\sigma_{n}^{2}\log\left(nc_{1}\right)-2\sigma_{n}^{2}\log\log2\right)}}{1-2c_{2}\sigma_{n}^{2}}
\end{equation}
with discriminant $\Delta$ calculated by
\begin{equation}
\Delta = \sigma_{n}^{4}\left(4c_{2}\log\left(nc_{1}\right)-4c_{2}\log\log2\right)+\sigma_{n}^{2}\left(2c_{2}\mu_{n}^{2}-2\log\left(nc_{1}\right)+2\log\log2\right).
\end{equation}
Under the same choice of $\theta$, note $2c_{2}\mu_{n}^{2}=2\log\left(nc_{1}\right)$ and the discriminant becomes
\begin{equation}
\Delta=\sigma_{n}^{4}\left(4c_{2}\log\left(nc_{1}\right)-4c_{2}\log\log2\right)+\sigma_{n}^{2}\left(2\log\log2\right).
\end{equation}
The quadratic inequality is satisfied everywhere if the discriminant is non-positive, so put $\Delta = 0$ and taking the positive solution for $\sigma_{n}^{2}$, giving
\begin{equation}
\sigma_{n}^{2} = \dfrac{-\log\log 2}{2c_{2}\left(\log\left(nc_{1}\right) - \log\log 2\right)}.
\end{equation}
Therefore the inequality is satisfied provided $nc_{1} > 1$, which occurs for sufficiently large $n$, since $c_{1} > 0$. Next we show that the stochastic dominance condition is satisfied for $z \in \left[\mu_{n}, 0\right]$, under the proposed choice of $\mu_{n}$ and $\sigma_{n}$ above. Over this interval, we can use the same upper bound on $Q\left(z\right)^{n}$ as before, and now we have the lower bound
\begin{equation}
Q\left(\dfrac{z-\mu_{n}}{\sigma_{n}}\right) \geq \exp\left(-\left(c_{2}\left(\dfrac{z-\mu_{n}}{\sigma_{n}}\right)^{2}-\log c_{1}\right)\right).
\end{equation}
Thus we want to show that
\begin{equation}
nc_{1}\exp\left(-c_{2}z^{2}\right)\geq c_{2}\left(\dfrac{z-\mu_{n}}{\sigma_{n}}\right)^{2}-\log c_{1}.
\end{equation}
Fix $z$, and recognise that $\frac{\mu_{n}^{2}}{\sigma_{n}^{2}} = O\left(\left(\log n\right)^{2}\right)$ in the right-hand side, while the left-hand side is $O\left(n\right)$. Therefore
\begin{equation}
O\left(n\right) \geq \left(\left(\log n\right)^{2}\right)
\end{equation}
since $O\left(e^{n}\right) \geq O\left(n^{2}\right)$. Lastly for the interval $z \in \left[0, \infty\right)$, we use the upper bound in Lemma \ref{lem:q-function-bounds} to give
\begin{equation}
Q\left(z\right)^{n} \leq \dfrac{1}{2^{n}}\exp\left(-\dfrac{nz^{2}}{2}\right)
\end{equation}
and we can use the same lower bound as in the preceding interval. In the same vein as above, we want to show
\begin{equation}
\left(\dfrac{n}{2}-\dfrac{c_{2}}{\sigma_{n}^{2}}\right)z^{2}-\dfrac{2c_{2}\mu_{n}}{\sigma_{n}^{2}}+n\log2+\dfrac{\mu_{n}^{2}}{\sigma_{n}^{2}}-\log c_{1}\geq0.
\end{equation}
The discriminant of the quadratic is non-positive when
\begin{equation}
\left(\dfrac{n}{2}-\dfrac{c_{2}}{\sigma_{n}^{2}}\right)\left(n\log2+\dfrac{\mu_{n}^{2}}{\sigma_{n}^{2}}-\log c_{1}\right)\geq\dfrac{c_{2}^{2}\mu_{n}^{2}}{\sigma_{n}^{4}}.
\label{eq:right-interval-stoch-dom}
\end{equation}
The left-hand side is $O\left(n^{2}\right)$ and the right-hand side is $O\left(\left(\log n\right)^{3}\right)$, thus this inequality is also satisfied for sufficiently large $n$.

\section{Implementation of Lower Bounds in Theorem \ref{thm:constructed-lower-bound}}
\label{sec:algorithms}

In continuation of the discussion from Remark \ref{rem:numerical-check}, the lower bounds in Theorem \ref{thm:constructed-lower-bound} can be implemented numerically. This is done by using sufficient conditions found in the proof of Lemma \ref{lem:stoch-dom-constructed} to check whether $n \geq n^{*}\left(\theta\right)$ for a given $n$ and $\theta$. We are required to check whether the inequality \eqref{eq:q-function-stoch-dom} is satisfied over each of the intervals $\left(\infty, \mu_{n}\right]$, $\left[\mu_{n}, 0\right]$ and $\left[0, \infty\right)$. The inequality is satisfied over $\left(\infty, \mu_{n}\right]$ by construction provided $nc_{1} > 1$, whereas \eqref{eq:right-interval-stoch-dom} contains the sufficient condition for the interval $\left[0, \infty\right)$. As for the bounded interval $\left[\mu_{n}, 0\right]$, we can directly evaluate (up to the available numerical precision) whether \eqref{eq:q-function-stoch-dom} is satisfied. Pseudocode to implement this numerical test is provided in Algorithm \ref{alg:test}. Using this test, we can implement the optimised lower bound \eqref{eq:success-prob-copula-lb-optimised} in Theorem \ref{thm:constructed-lower-bound}. Pseudocode for this is found in Algorithm \ref{alg:lb}.

\begin{algorithm}[H]
\caption{Numerical test of sufficient conditions for $n \geq n^{*}\left(\theta\right)$ in Theorem \ref{thm:constructed-lower-bound}}
\label{alg:test}
\begin{algorithmic}[1]
\Function{NumericalTest}{$n$, $\theta$}
\State $c_{1} \gets \dfrac{1}{2} - \dfrac{\theta}{\pi}, \quad c_{2} \gets \dfrac{\cot{\theta}}{\pi - 2\theta}$
\State $\mu_{n} \gets -\sqrt{\dfrac{\log\left(nc_{1}\right)}{c_{2}}}, \quad \sigma_{n}^{2} \gets \dfrac{-\log\log 2}{2c_{2}\left(\log\left(nc_{1}\right) - \log\log 2\right)}$
\If{$nc_{1} \leq 1$}  \Comment Check sufficient condition for the interval $\left(\infty, \mu_{n}\right]$
    \State \Return $\mathtt{False}$
\ElsIf{\eqref{eq:q-function-stoch-dom} fails over $\left[\mu_{n}, 0\right]$} \Comment Check sufficient condition for the interval $\left[\mu_{n}, 0\right]$
    \State \Return $\mathtt{False}$
\ElsIf{\eqref{eq:right-interval-stoch-dom} fails} \Comment Check sufficient condition for the interval $\left[0, \infty\right)$
    \State \Return $\mathtt{False}$
\Else
    \State \Return $\mathtt{True}$
\EndIf
\EndFunction
\end{algorithmic}
\end{algorithm}

\begin{algorithm}[H]
\caption{Implementation of lower bounds in Theorem \ref{thm:constructed-lower-bound}}
\label{alg:lb}
\begin{algorithmic}[1]
\Function{LowerBound}{$n$, $\alpha$, $\rho$, $\theta$} \Comment{Lower bound in \eqref{eq:success-prob-copula-lb}}
\If{\Call{\textsc{NumericalTest}}{$n$, $\theta$}}
\State \Return Right-hand side of \eqref{eq:success-prob-copula-lb}
\Else
\State \Return $0$
\EndIf
\EndFunction
\Function{OptimisedLowerBound}{$n$, $\alpha$, $\rho$} \Comment{Optimised lower bound in \eqref{eq:success-prob-copula-lb-optimised}}
\State \Return $\max_{\theta \in \left(0, \pi/2\right)}$\Call{\textsc{LowerBound}}{$n$, $\alpha$, $\rho$, $\theta$}
\EndFunction
\end{algorithmic}
\end{algorithm}

\end{appendices}

\end{document}